\definecolor{mblue}{rgb}{0,0,.8}
\newcommand{\N}{\mathbb N}
\newcommand{\Z}{\mathbb Z}
\newcommand{\Q}{\mathbb Q}
\newcommand\Es[0]{E^\ast}
\renewcommand{\k}{\kappa}
\newcommand{\s}{\sigma}
\newtheorem{thm}{Theorem}[section]
\newtheorem{lem}[thm]{Lemma}
\newtheorem{remark}[thm]{Remark}
\newtheorem{dfn}[thm]{Definition}
\newtheorem{prop}[thm]{Proposition}
\newtheorem{cor}[thm]{Corollary}
\newtheorem{thmx}{Theorem}
\begin{document}

\title[Eisenstein series and overconvergence, II]{Eisenstein series, $p$-adic modular functions, and overconvergence, II}
\author[Ian Kiming, Nadim Rustom]{Ian Kiming, Nadim Rustom}
\address[Ian Kiming]{Department of Mathematical Sciences, University of Copenhagen, Universitetsparken 5, DK-2100 Copenhagen \O ,
Denmark.}
\email{kiming@math.ku.dk}
\address[Nadim Rustom]{Tryg Forsikring A/S, Klausdalsbrovej 601, 2750 Ballerup, Denmark}
\email{restom.nadim@gmail.com}

\subjclass[2020]{11F33, 11F85}
\keywords{}


\begin{abstract} Let $p$ be a prime number. Continuing and extending our previous paper with the same title, we prove explicit rates of overconvergence for modular functions of the form $\frac{\Es_k}{V(\Es_k)}$ where $\Es_k$ is a classical, normalized Eisenstein series on $\Gamma_0(p)$ and $V$ the $p$-adic Frobenius operator. In particular, we extend our previous paper to the primes $2$ and $3$. For these primes our main theorem improves somewhat upon earlier results by Emerton, Buzzard and Kilford, and Roe. We include a detailed discussion of those earlier results as seen from our perspective. We also give some improvements to our earlier paper for primes $p\ge 5$.

Apart from establishing these improvements, our main purpose here is also to show that all of these results can be obtained by a uniform method, i.e., a method where the main points in the argumentation is the same for all primes.

We illustrate the results by some numerical examples.
\end{abstract}

\maketitle

\section{Introduction}\label{sec:intro} Let $p$ be a prime number. By $v_p$ we denote the $p$-adic valuation of $\overline{\Q}_p$, normalized so that $v_p(p)=1$.

Suppose that $k\ge 4$ is an even integer divisible by $p-1$. Then the classical Eisenstein series $E_k$, a modular form of weight $k$ and level $1$ with $q$-expansion
$$
E_k(q) = 1 - \frac{2k}{B_k} \sum_{n=1}^{\infty} \sigma_{k-1}(n) q^n ,
$$
as well as its $p$-deprived counterpart
$$
\Es_k := \frac{E_k - p^{k-1}V(E_k)}{1-p^{k-1}} ,
$$
a modular form of weight $k$ on $\Gamma_0(p)$, are both lifts of a power of the Hasse invariant. Here, we denote as usual by $V$ the Frobenius operator that acts as $q\mapsto q^p$ on $q$-expansions.

This means that the modular function $\frac{\Es_k}{V(\Es_k)}$ is non-vanishing on the ordinary locus of $X_0(p)$ and hence is an overconvergent modular function with some positive rate of overconvergence. These modular functions are specializations of more general functions $\frac{\Es_{\k}}{V(\Es_{\k})}$ for characters $\k$ on $\Z_p^{\times}$ that are trivial on the $(p-1)$st roots of unity. This family of functions plays a decisive role in Coleman's original proof in \cite{coleman_banach} of the existence of $p$-adic families (``Coleman families") of modular forms passing through a given eigenform of finite $p$-slope. Although more conceptual approaches to this theory now exist, cf.\ \cite{andreatta_iovita_stevens}, \cite{pilloni}, there are still reasons to continue the study of the family $\frac{\Es_{\k}}{V(\Es_{\k})}$: first, knowledge of rates of overconvergence may be used to compute examples of Coleman families including explicit information about the radius of convergence, cf.\ \cite{cst}, \cite{destefano}; there are precious few such examples in the literature. Secondly, one should remember that the proofs of the ``halo'' conjecture for the primes $2$ and $3$, cf.\ \cite{buzzard_kilford} and \cite{roe_3-adic}, respectively, contain as a decisive element precisely the establishment of explicit rates of overconvergence of the family $\frac{\Es_{\k}}{V(\Es_{\k})}$. Probably for these reasons, Coleman formulated and proved for $p=2,3$ in \cite{coleman_eisenstein} a general conjecture about this, a version of which was proved for $p\ge 5$ in \cite{akw}. It can be noted that in both cases, i.e., for $p=2,3$ as well as for $p\ge 5$, a rate of overconvergence of the family $\frac{\Es_{\k}}{V(\Es_{\k})}$ is ultimately derived from information about rates of overconvergence of specializations $\frac{\Es_k}{V(\Es_k)}$ with $k$ as above. For $p=2$, $p=3$, and $p\ge 5$ this information comes from \cite{emerton_thesis} and \cite{buzzard_kilford}, from \cite{roe_3-adic}, and from \cite{kr}, respectively.

With motivation as just described, the purpose of this paper is two-fold: first, to present improved statements about the rates of overconvergence of modular functions $\frac{\Es_k}{V(\Es_k)}$ for all primes, and secondly also to show that all of these statements can be obtained by a uniform method that works for all primes.

We prove the following theorem.

\begin{thmx}\label{thm:main} Let $p$ be a prime, and let $k\in\N$ be even with $k\ge 4$ and $k\equiv 0 \pmod{p-1}$. Define:
$$
\rho := \left\{ \begin{array}{ll} \frac{1}{p+1} & \mbox{ if } p\ge 5 \mbox{ or } (p\in \{2,3\} \mbox{ and } k\equiv 0 \pmod{2p}) \\ \frac{1}{2p} & \mbox{ otherwise} \end{array} \right.
$$
and
$$
t := \left\{ \begin{array}{ll} 3 & \mbox{ if } p=2, k=4 \\ 3+v_2(k) & \mbox{ if } p=2, k>4 \\ 2+v_p(k) & \mbox{ if } p\ge 3 .\end{array} \right.
$$

Then
$$
\frac{\Es_k}{V(\Es_k)} \in M_0\left(\Z_p, \ge \frac{t}{t+1} \cdot \rho \right).
$$
\end{thmx}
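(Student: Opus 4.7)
The plan is to adapt the strategy from our earlier paper uniformly to all primes by expressing $\frac{\Es_k}{V(\Es_k)}$ as a power series in a carefully chosen parameter and bounding the $p$-adic valuations of the coefficients term by term. First I would fix a uniformizer $y$ at the cusp $0$ of $X_0(p)$ whose valuation at the boundary of the overconvergent region accounts for the factor $\rho$ in the statement. For $p\ge 5$ the natural choice is essentially $1 - \Es_{p-1}/V(\Es_{p-1})$ (which has a simple zero giving $\rho = 1/(p+1)$); for $p = 2,3$ I would use a suitable low-weight analogue, and the dichotomy $\rho = 1/(p+1)$ vs.\ $\rho = 1/(2p)$ reflects whether $k$ is divisible by $2p$ (which governs whether the correct replacement series is available in even weight or only in doubled weight).

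Next, using the definition $\Es_k = (E_k - p^{k-1}V(E_k))/(1-p^{k-1})$, I would write
$$
\frac{\Es_k}{V(\Es_k)} \; = \; \frac{E_k - p^{k-1}V(E_k)}{V(E_k) - p^{k-1}V^2(E_k)}
$$
and expand the right-hand side as a series $\sum_{n\ge 0} a_n y^n$ in the chosen parameter. The coefficients $a_n$ are then expressible through the Fourier coefficients $-\frac{2k}{B_k}\sigma_{k-1}(m)$ of $E_k$, on which we have excellent $p$-adic control: von Staudt--Clausen gives $v_p(B_k/k)$, and the Kummer-style congruences produce an extra gain proportional to $v_p(k)$ whenever $p\mid k$. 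This is precisely where the integer $t = 2+v_p(k)$ (respectively $3+v_2(k)$) enters: each additional $p$-divisibility in $k$ translates into an extra $p$-adic saving on the coefficients $a_n$, and comparing this gain against the geometric cost $n\rho$ of moving out into the overconvergent region yields the bound $\frac{t}{t+1}\rho$ as the optimal balance point.

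The final step is a uniform valuation estimate: show $v_p(a_n) \ge (n+1)\rho - r$ (or an equivalent inequality depending on normalization conventions) for all $n\ge 1$, where $r = \frac{t}{t+1}\rho$. This would immediately give overconvergence at rate $r$ by the standard characterization of $M_0(\Z_p, \ge r)$. The same four ingredients, namely von Staudt--Clausen, Kummer-type congruences, the $q$-expansion of $\Es_k$, and the geometric interpretation of $y$, suffice at every prime, which is the promised uniformity.

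The main obstacle will be obtaining the sharp constant $\frac{t}{t+1}$ rather than a weaker constant such as $\frac{t-1}{t}$, because a naive coefficient-by-coefficient bound tends to lose precisely one $p$-adic unit. Overcoming this requires an interaction argument: for the small-index coefficients $a_n$ with $n \le t$, one uses direct computation of the leading terms (where the Bernoulli denominator is the dominant contribution), while for large $n$ the Kummer congruences feed back the $v_p(k)$ gain. Splicing the two estimates together at the critical index $n \approx t$ yields the claimed $\frac{t}{t+1}\rho$. Handling $p=2$ and $p=3$ will require additional care because the replacement for $E_{p-1}$ has different arithmetic, and the weight dichotomy $k\equiv 0 \pmod{2p}$ forces the separate case in the definition of $\rho$; however, once the right parameter $y$ is in place, the remaining estimates run in parallel with the $p\ge 5$ case.
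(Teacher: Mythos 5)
There is a genuine gap. Your balancing heuristic is the right one --- the paper does obtain the exponent $\frac{t}{t+1}$ by splicing a congruence bound $v_p(a_m)\ge t$ (coming from von Staudt--Clausen, Fermat, and the elementary congruence $(1+ap^s)^n\equiv 1+nap^s \pmod{p^{1+s+v_p(n)}}$, which is where $t=2+v_p(k)$, resp.\ $3+v_2(k)$, enters) against a linear-in-$m$ bound, exactly at the crossover index --- but your proposal contains no viable mechanism for producing the linear bound, because the $U$ operator is entirely absent from it. You cannot pass from $p$-adic control of the Fourier coefficients $-\frac{2k}{B_k}\sigma_{k-1}(m)$ to valuation bounds on the coefficients of the expansion of $\frac{\Es_k}{V(\Es_k)}$ in a uniformizer: the conversion from $q$-expansion to $f_p$-expansion is a recursion that destroys any such direct control, and a congruence mod $p^t$ in $q$-expansion yields only the constant bound $v_p(a_m)\ge t$, which gives no overconvergence whatsoever as $m\to\infty$. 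The paper's actual route is: introduce the auxiliary weight-$k$ form $F$ (an explicit product of $E_4$, $E_6$, $E_8$, $E_{p-1}$), prove $\Es_k\equiv E_k\equiv F\pmod{p^t}$, establish $\frac{U(F)}{F}=U\bigl(\frac{F}{V(F)}\bigr)\in\frac{1}{p}M_0(\Z_p,\ge p\rho)$ via Coleman's trick $U(SV(T))=U(S)T$ together with Katz's integrality lemma for $U$, run an induction on $\frac{U^i(F)}{F}$, and finally reach $\Es_k$ as the $q$-expansion limit of $U^i(F)$ (using $U(\Es_k)=\Es_k$ and that $\Es_k-F$ is a cuspidal Serre $p$-adic form, so $U^i(\Es_k-F)\to 0$). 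The bound $v_p(a_m)\ge -1+\frac{12}{p-1}p\rho m$ that gets spliced against $v_p(a_m)\ge t$ is precisely the output of this $U$-operator argument; nothing in your outline replaces it.

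Two further inaccuracies: the dichotomy in $\rho$ for $p\in\{2,3\}$ is not about a ``replacement series in doubled weight'' but about the explicit rational expressions of $E_4/V(E_4)$ and $E_6/V(E_6)$ in the Hauptmodul $f_p$ (e.g.\ for $p=2$, $E_4/V(E_4)=(1+2^8f_2)/(1+2^4f_2)$ overconverges at rate $\frac{1}{p+1}$ while $E_6/V(E_6)$ only at $\frac{1}{2p}$, so the better rate is available exactly when $F$ can be built from $E_4$ alone, i.e.\ when $2p\mid k$); and the splice point is not at $n\approx t$ but at $m=(t+1)\cdot\frac{p-1}{12}\cdot\frac{1}{p\rho}$, where the line $-1+\frac{12}{p-1}p\rho m$ crosses the level $t$.
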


Here, the notation $f\in M_0(\Z_p, \ge \s )$, as will be explained in more detail below in section \ref{sec:prelim}, means that $f$ is a modular function (i.e., weight $0$ modular form) with coefficients in $\Z_p$ that has rate of overcorvergence $r$ whenever $r < \s$.

The theorem improves upon statements from \cite{kr} for $p\ge 5$ by taking $v_p(k)$ into account. The same is true in relation to the papers \cite{buzzard_kilford}, \cite{roe_3-adic} when $p=2,3$, $k\equiv 0 \pmod{2p}$.

However, when $p\in \{2,3\}$ and $k\not\equiv 0 \pmod{2p}$, the theorem is not in optimal shape: in order to keep the proof of the main theorem as simple and straightforward as possible, we avoided making use of special, improved integrality properties for the $U$ operator that are true for $p=2,3$ and rates of overconvergence less than $\frac{1}{2p}$. However, we will explain this in detail in section \ref{subsec:special_p=2,3}. In particular, this will show that central results from the papers \cite{buzzard_kilford}, \cite{roe_3-adic} on the overconvergence of functions $\frac{\Es_k}{V(\Es_k)}$ can also be obtained by the method we present here. The relevant statement is Proposition \ref{prop:special_E/V(E)} below.

We explain in more detail the comparison between Theorem \ref{thm:main} and results from the papers \cite{buzzard_kilford}, \cite{roe_3-adic}, \cite{kr} in section \ref{subsec:numerical} below. We also illustrate with a number of numerical examples.

\section{Preliminaries}\label{sec:prelim} We summarize here the basic facts about overconvergent modular functions that we will need. For a more comprehensive discussion see \cite[Section 2]{kr}.

If $K/\Q_p$ is a finite extension with ring of integers $O$, we have for $r\in O$ the notion of $r$-overconvergent modular functions of level $1$. We denote the $O$-module of these by $M_0(O,r)$. If $p\ge 5$ an element $g\in M_0(O,r)$ has a ``Katz expansion''
$$
g = \sum_{i\ge 0} \frac{b_i}{E_{p-1}^i}
$$
where the coefficients $b_i$ are modular forms that come from certain $O$-modules $B_i(O)$ of modular forms of weight $i(p-1)$. For $i>0$ the modules $B_i(O)$ provide splittings $M_{i(p-1)}(O) = E_{p-1} \cdot M_{(i-1)(p-1)} \oplus B_i(O)$ where $M_l(\cdot)$ denotes modular forms of weight $l$. One puts $B_0(O) := O$. That $g\in M_0(O,r)$ is then equivalent to having $v_p(b_i) \ge i v_p(r)$ for all $i$, and $v_p(b_i) - i v_p(r) \rightarrow \infty$ for $i\rightarrow \infty$. For details on this, confer \cite{katz_padic}, Propositions 2.6.2, 2.8.1, or the review in \cite[Section 2]{kr}.

For $\s \in \Q \cap [0,1]$ we define the $O$-module $M_0(O,\ge \s)$ (in fact an $O$-algebra) as consisting of those overconvergent modular functions $g$ for which $g\in M_0(O,r)$ for some $r\in O$, and such that: whenever $K'/K$ is a finite extension with ring of integers $O'$, and whenever $r'\in O'$ is such that $v_p(r') < \s$ then $g\in M_0(O',r')$.

This may seem like a very convoluted definition, but is in fact very convenient when working with Katz expansions as above: suppose again that $p\ge 5$ and that $g$ has a Katz expansion as above. Then $g\in M_0(O,\ge \s)$ is equivalent to the statement that $v_p(b_i) \ge i \s$ for all $i$, cf.\ \cite[Proposition 2.3]{kr}.

When $p\in \{2,3\}$, due to the simple fact that $E_{p-1}$ then does not exist as a classical modular form, we do not have Katz expansions of overconvergent modular functions as above. Instead, one option would be to base oneself on Vonk's paper \cite{vonk_small_primes} where it is shown that one can use inverse powers of $E_4$ and $E_6$, respectively, to obtain orthonormal bases. In this paper, though, we will be making use of the orthornormal bases given by Loeffler in \cite{loeffler}: for $p=2,3,5,7,13$ the function
$$
f_p(z) := \left( \frac{\eta(pz)}{\eta(z)} \right)^{\frac{24}{p-1}},
$$
where $\eta$ the usual Dedekind $\eta$-function, is a uniformizer for the genus $0$ modular curve $X_0(p)$.

Given an element $g\in M_0(O,r)$ one can compute an expansion, formal to begin with,
$$
g = \sum_{i=0}^{\infty} a_i f_p^i
$$
where $a_i\in O$. From a practical point of view, computing this expansion from the $q$-expansion of $g$ is easily done recursively due to the fact that the $q$-expansion of $f_p$ starts with $q$. Given this expansion, \cite[Corollary 2.2]{loeffler} implies that for $v_p(r) < \frac{p}{p+1}$ we have $g\in M_0(O,r)$ if and only if $v_p(a_i) \ge \frac{12}{p-1} v_p(r) i$ for all $i$ and $v_p(a_i) - \frac{12}{p-1} v_p(r) i \rightarrow \infty$ for $i\rightarrow \infty$. These facts imply the following statement that we single out as a proposition for reference in arguments below.

\begin{prop}\label{prop:oc_vs_loeffler} Suppose that $p\in \{2,3,5,7,13\}$, that $g\in M_0(O,r)$ for some $r$, and that $g = \sum_{i=0}^{\infty} a_i f_p^i$ with $a_i\in O$, formally in $q$-expansions. Then for $\s \le \frac{p}{p+1}$ we have $g\in M_0(O,\ge \s)$ if and only if
$$
v_p(a_i) \ge \frac{12}{p-1} \s i
$$
for all $i$.
\end{prop}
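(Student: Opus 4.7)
The plan is to unwind both directions into statements about the coefficients $a_i$ via Loeffler's criterion (the fact cited immediately before the proposition): for $v_p(r) < \frac{p}{p+1}$, one has $g \in M_0(O,r)$ if and only if $v_p(a_i) \ge \frac{12}{p-1} v_p(r) i$ for all $i$ together with $v_p(a_i) - \frac{12}{p-1} v_p(r) i \to \infty$. Since the expansion $g = \sum a_i f_p^i$ is canonical (coming from the $q$-expansion recursion), the same coefficients $a_i$ appear after any base change $O \to O'$, so Loeffler's criterion can be applied uniformly over all finite extensions.

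For the easier implication $(\Leftarrow)$, I assume $v_p(a_i) \ge \frac{12}{p-1}\s i$ for all $i$. Given any finite extension $K'/K$ with ring of integers $O'$ and any $r' \in O'$ with $v_p(r') < \s \le \frac{p}{p+1}$, set $\d := \s - v_p(r') > 0$. Then
$$
v_p(a_i) - \frac{12}{p-1} v_p(r') i \ge \frac{12}{p-1}\d i,
$$
which is nonnegative and tends to $\infty$. Loeffler's criterion then gives $g \in M_0(O',r')$, and hence by definition $g \in M_0(O, \ge \s)$.

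For the reverse implication $(\Rightarrow)$, suppose $g \in M_0(O, \ge \s)$ and let $\s' \in \Q \cap [0,\s)$ be arbitrary. Choosing a sufficiently ramified finite extension $K'/K$ (e.g., adjoining a suitable root of $p$), one can find $r' \in O'$ with $v_p(r') = \s'$. By definition $g \in M_0(O', r')$, so Loeffler's criterion yields $v_p(a_i) \ge \frac{12}{p-1}\s' i$ for every $i$. Letting $\s'$ range over rationals increasing to $\s$ gives the desired inequality $v_p(a_i) \ge \frac{12}{p-1}\s i$.

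I do not expect a serious obstacle: the statement is essentially a formal consequence of the definition of $M_0(O, \ge \s)$ combined with Loeffler's criterion. The only minor subtlety is the need to realize valuations $\s'$ arbitrarily close to (but strictly less than) $\s$ by passing to ramified extensions, which is why the tail condition $v_p(a_i) - \frac{12}{p-1}\s i \to \infty$ in Loeffler's criterion can be absorbed into the strict inequality $v_p(r') < \s$ and does not appear in the final statement.
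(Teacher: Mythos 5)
Your proposal is correct and follows essentially the same route as the paper, which states the proposition as a direct consequence of Loeffler's criterion (\cite[Corollary 2.2]{loeffler}) combined with the definition of $M_0(O,\ge\s)$, without writing out the details. Your unwinding of the two directions --- absorbing the tail condition into the strict inequality $v_p(r')<\s$ in one direction, and approximating $\s$ from below by valuations realized in ramified extensions in the other --- is exactly the intended argument.
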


We will also make use of the following observation that is proved in \cite{kr}.

\begin{prop}\label{prop:conv_in_qexp} (Cf.\ \cite[Corollary 2.7]{kr}.) Suppose that $g_i\in M_0(O, \ge \s)$ is a sequence of elements that converges in $q$-expansion to an element $g(q) \in O[[q]]$. Then $g(q)$ is the $q$-expansion of an element $g\in M_0(O,\ge \s)$.
\end{prop}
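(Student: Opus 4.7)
The plan is to construct the candidate modular function $g$ by writing down its expansion in the appropriate basis (Katz for $p\ge 5$, Loeffler in terms of $f_p$ for $p\in\{2,3\}$), and then use the characterizations recalled in Section \ref{sec:prelim} to show directly that $g\in M_0(O,\ge \s)$. The point that makes this work is that, in either expansion, the coefficients can be recovered from the $q$-expansion by a triangular, $O$-linear procedure, so convergence of $q$-expansions forces coefficient-by-coefficient convergence of expansion coefficients.

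Concretely, in the Loeffler setting one writes each $g_i = \sum_{j\ge 0} a_{i,j} f_p^j$; since $f_p = q + O(q^2)$, the coefficient $a_{i,j}$ is obtained as an $O$-linear combination of the first $j+1$ Fourier coefficients of $g_i$ by recursive inversion. The hypothesis that $g_i(q)\to g(q)$ in the $p$-adic topology on $O[[q]]$ therefore forces, for each fixed $j$, that $a_{i,j}$ converges $p$-adically to some $a_j\in O$, where $a_j$ is the corresponding $O$-linear combination of the Fourier coefficients of $g(q)$. In the Katz setting for $p\ge 5$, one argues the same way, but now using the integral direct-sum decompositions $M_{i(p-1)}(O) = E_{p-1}\cdot M_{(i-1)(p-1)}(O) \oplus B_i(O)$ to extract each Katz coefficient $b_i$ from a modular form of the correct weight in a $p$-adically continuous manner.

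Once the limit coefficients are in hand, their valuations are controlled by passage to the limit. By Proposition \ref{prop:oc_vs_loeffler} (or its Katz counterpart), the hypothesis $g_i\in M_0(O,\ge\s)$ gives $v_p(a_{i,j})\ge \frac{12}{p-1}\s j$ for all $i,j$ (respectively $v_p(b_{i,j})\ge \s j$ in the Katz case). The ultrametric inequality then yields the same bound for the limit coefficients $a_j$. For any $r$ with $v_p(r) < \s$ we obtain
\[
v_p(a_j) - \tfrac{12}{p-1} v_p(r)\, j \;\ge\; \tfrac{12}{p-1}(\s - v_p(r))\, j \longrightarrow \infty,
\]
so the candidate series defines an element of $M_0(O,r)$ for every such $r$, which is exactly the requirement for $g\in M_0(O,\ge\s)$. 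The main obstacle is really the first step: ensuring that the expansion coefficients depend $p$-adically continuously on the $q$-expansion. For Loeffler this is automatic from the triangular shape of $f_p$, but in the Katz setting one needs the integral direct-sum decompositions together with the non-vanishing of $E_{p-1}$ modulo $p$, which are precisely the inputs that make the theory of Katz expansions work over $O$ as recalled in Section \ref{sec:prelim}.
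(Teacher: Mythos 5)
Your proposal is correct and follows essentially the same route as the paper, which simply defers to \cite[Corollary 2.7]{kr} (the Katz-expansion argument for $p\ge 5$) and notes that for $p=2,3$ one substitutes Loeffler's orthonormal basis in $f_p$ --- precisely the coefficient-by-coefficient limiting argument you carry out, with the decay condition for each $v_p(r)<\s$ coming for free from the strict inequality. The only caveat worth noting is that the $f_p$-expansion criterion of Proposition \ref{prop:oc_vs_loeffler} is stated for $\s\le\frac{p}{p+1}$, so your Loeffler-based argument implicitly carries that restriction, but this matches how the result is actually used in the paper.
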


In the paper \cite{kr} the standing assumption is $p\ge 5$, but the proof of Corollary 2.7 of that paper is easily adapted to the cases $p=2,3$, for instance by using Loeffler's orthonormal bases above instead of working with Katz expansions.

Finally, the $U$ operator will obviously play a major role in the proofs. Of particular importance to us is its interplay with integral structures. We have the following general statement, a consequence of well-known properties of $U$, going all the way back to the Integrality Lemma 3.11.4 in Katz' foundational paper \cite{katz_padic}.

\begin{prop}\label{prop:U_integrality} Suppose that $\s \le \frac{1}{p+1}$. Then the $U$ operator maps $M_0(O, \ge \s)$ to $\frac{1}{p} \cdot M_0(O,\ge p\s )$.
\end{prop}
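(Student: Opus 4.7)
The plan is to deduce this directly from Katz's Integrality Lemma 3.11.4 in \cite{katz_padic}, of which the proposition is essentially a repackaging in the $\ge \s$ language of Section~\ref{sec:prelim}.

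Fix $g \in M_0(O, \ge \s)$. To show $U(g) \in \frac{1}{p} \cdot M_0(O, \ge p\s)$, I must verify, for every finite extension $K'/K$ with ring of integers $O'$ and every $r' \in O'$ with $v_p(r') < p\s$, that $p \cdot U(g) \in M_0(O', r')$. Since $v_p(r')/p < \s$, I would pick a rational $\alpha$ with $v_p(r')/p < \alpha < \s$, and then, after enlarging $K'$ if necessary, an element $r \in O'$ with $v_p(r) = \alpha$. Because $\alpha < \s$, the defining property of $M_0(O, \ge \s)$ yields $g \in M_0(O', r)$.

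Next I would invoke Katz's Integrality Lemma: since $v_p(r) = \alpha \le \s \le \frac{1}{p+1}$, the lemma gives $p \cdot U(g) \in M_0(O', s)$ for any $s \in O'$ with $v_p(s) = p\alpha$ (for concreteness one may take $s = r^p$). Finally, because $p\alpha > v_p(r')$, the obvious inclusion $M_0(O', s) \subseteq M_0(O', r')$ — immediate from the Katz expansion description for $p \ge 5$, and from Proposition~\ref{prop:oc_vs_loeffler} for $p \in \{2,3\}$, since $v_p(b_i) \ge i v_p(s) > i v_p(r')$ together with $v_p(b_i) - i v_p(r') \to \infty$ — delivers $p \cdot U(g) \in M_0(O', r')$, as required.

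The main, indeed the only nontrivial, ingredient is Katz's Integrality Lemma itself. Its content is that on the region $v_p \le \frac{1}{p+1}$ the canonical subgroup $C$ of an elliptic curve exists and the quotient $E \mapsto E/C$ multiplies the valuation of the Hasse invariant by $p$; this lets one express $U$, up to the factor $p$, as a map implemented by the quotient, whose effect on the $r$-overconvergence rate is precisely multiplication of $v_p(r)$ by $p$. Granted this classical input, the argument is pure bookkeeping against the somewhat convoluted definition of the spaces $M_0(O, \ge \s)$.
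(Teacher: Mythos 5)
The paper offers no proof of this proposition beyond citing Katz's Integrality Lemma 3.11.4 as the source of the ``well-known properties of $U$'', and that lemma is exactly the key input in your argument. Your unpacking of the definition of $M_0(O,\ge \sigma)$ (choosing $\alpha$ with $v_p(r')/p<\alpha<\sigma$, applying Katz, and using the standard inclusion of overconvergence spaces) is correct bookkeeping, so your proposal is essentially the paper's intended proof made explicit.
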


\subsection{Special preliminaries for \texorpdfstring{$p=2,3$}{p=2,3}} For the primes $p=2,3$, our work will take as a starting point knowledge about rates of overconvergence of the modular functions $\frac{E_4}{V(E_4)}$ and $\frac{E_6}{V(E_6)}$. This is readily obtained: the following formulas are easily found and proved via comparison of $q$-expansions, the first and last of these are also given in \cite{calegari_proper}, cf.\ the remark on p.\ 212:

For $p=2$ we have
$$
\frac{E_4}{V(E_4)} = \frac{1+2^8 f_2}{1+2^4 f_2} , \quad \frac{E_6}{V(E_6)} = \frac{1-2^9 f_2}{1-2^3 f_2} ,
$$
and for $p=3$:
$$
\frac{E_4}{V(E_4)} = \frac{1+3^5 f_3}{1+3 f_3} , \quad \frac{E_6}{V(E_6)} = \frac{1-2\cdot 3^5 f_3-3^9 f_3^2}{1+2\cdot 3^2 f_3 - 3^3 f_3^2} .
$$

We determine the rate of overconvergence of the above modular functions via Proposition \ref{prop:oc_vs_loeffler}: we can turn the expression of the given modular function $h$ as a rational function in $f_p$ into a formal power series $h = 1 + a_1 f_p + a_2 f_p^2 + \ldots $ where the coefficients for the cases above will be in $\Z_p$. Then $h\in M_0(\Z_p, \ge \s)$ if we have $v_p(a_i) \ge \frac{12}{p-1} \s i$ for all $i$. If we also have $v_p(b_i) \ge \frac{12}{p-1} \s i$ for all $i$ for the coefficients in the expansion $1/h = 1 + b_1 f_p + b_2 f_p^2 + \ldots$, we can conclude that $h$ is even a unit in $h\in M_0(\Z_p, \ge \s)$.

In this way, the formulas above can easily be seen to imply the following proposition.

\begin{prop}\label{prop:rate_of_oc_E/VE} For $p=2$ the functions $E_4/V(E_4)$ and $E_6/V(E_6)$ are units in $M_0(\Z_p,\ge \frac{1}{p+1})$ and $M_0(\Z_p, \ge \frac{1}{2p})$, respectively.

For $p=3$ the functions $E_4/V(E_4)$ and $E_6/V(E_6)$ are units in $M_0(\Z_p, \ge \frac{1}{2p})$ and $M_0(\Z_p,\ge \frac{1}{p+1})$, respectively.
\end{prop}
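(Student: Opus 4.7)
The plan is to verify the four statements directly via Proposition \ref{prop:oc_vs_loeffler}. For each of the four rational expressions $h = N(f_p)/D(f_p)$ given above, I would compute the formal power series expansion $h = \sum_{i\ge 0} a_i f_p^i$ (all coefficients lie in $\Z_p$ since $D(0)=1$), together with the analogous expansion $1/h = \sum_{i\ge 0} b_i f_p^i$, and then verify the bounds $v_p(a_i) \ge \frac{12}{p-1} \s i$ and $v_p(b_i) \ge \frac{12}{p-1} \s i$ for all $i$, where $\s$ is the rate claimed in each case. The constant $\frac{12}{p-1}\s$ works out to $4$, $3$, $1$, and $3/2$ in the four cases; in the last case the bound should be read as $v_3(a_i) \ge \lceil 3i/2 \rceil$ since valuations are integers.

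In the three cases where $D$ is linear in $f_p$, the expansion of $h$ is just a geometric series, and for $i \ge 1$ the coefficient $a_i$ takes the form $(\pm c)^{i-1}$ times an explicit rational integer whose $p$-adic valuation can be read off at once. The required bound then reduces to a single numerical inequality, and the same one-line calculation handles the reciprocal $1/h$ after swapping numerator and denominator.

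The only case requiring genuine work is $E_6/V(E_6)$ for $p=3$, where the denominator $1 + 2\cdot 3^2 f_3 - 3^3 f_3^2$ is quadratic. Here I would set up the two-term linear recurrence $a_i = -18 a_{i-1} + 27 a_{i-2}$, valid once $i$ exceeds the degree of the numerator, compute $a_0, a_1, a_2$ by hand as base cases, and prove $v_3(a_i) \ge \lceil 3i/2 \rceil$ by strong induction. The inductive step gives $v_3(a_i) \ge \min\!\bigl(2 + v_3(a_{i-1}),\, 3 + v_3(a_{i-2})\bigr)$, and a small parity check shows that both entries of the minimum are at least $\lceil 3i/2 \rceil$. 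The reciprocal $1/h$ is handled by the same method, but with more room to spare, since the coefficients of the denominator of $h^{-1}$ carry the larger valuations $v_3(2\cdot 3^5)=5$ and $v_3(3^9)=9$. The main obstacle, such as it is, is precisely this tight parity check in the $h$-expansion; everything else reduces to bookkeeping.
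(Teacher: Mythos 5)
Your proposal is correct and follows exactly the route the paper takes: the paper also expands each of the four rational functions (and their reciprocals) as power series in $f_p$ and verifies the valuation bounds $v_p(a_i)\ge\frac{12}{p-1}\s i$ from Proposition \ref{prop:oc_vs_loeffler}, merely asserting that this is "easily seen" where you supply the geometric-series computations and the two-term recurrence with the parity check for the quadratic denominator at $p=3$. Your filled-in details (in particular the base cases $v_3(a_1)=2$, $v_3(a_2)=3$ and the inductive bound $v_3(a_i)\ge\min(2+v_3(a_{i-1}),\,3+v_3(a_{i-2}))$) check out.
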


\section{Proofs}

\subsection{Proof of Theorem \ref{thm:main}}\label{subsec:proof_thm_A} We start the proof by defining an auxiliary modular form $F$ as well as certain quantities attached to it.

\begin{dfn}\label{dfn:F_etc} Let $p$ be a prime, and let $k\in\N$ be even with $k\ge 4$ and $k\equiv 0 \pmod{p-1}$. Define $\rho$ and $t$ as in Theorem \ref{thm:main}, i.e.,
$$
\rho := \left\{ \begin{array}{ll} \frac{1}{p+1} & \mbox{ if } p\ge 5 \mbox{ or } (p\in \{2,3\} \mbox{ and } k\equiv 0 \pmod{2p}) \\ \frac{1}{2p} & \mbox{ otherwise,} \end{array} \right.
$$
$$
t := \left\{ \begin{array}{ll} 3 & \mbox{ if } p=2, k=4 \\ 3+v_2(k) & \mbox{ if } p=2, k>4 \\ 2+v_p(k) & \mbox{ if } p\ge 3 ,\end{array} \right.
$$
and define additionally,
$$
F := \left\{ \begin{array}{ll} E_4^{k/4} & \mbox{ if } p=2 \mbox{ and } k \equiv 0 \pmod{4} \\ E_4^{(k-6)/4} E_6 & \mbox{ if } p=2 \mbox{ and } k\equiv 2 \pmod{4} \\ E_6^{k/6} & \mbox{ if } p=3 \mbox{ and } k \equiv 0 \pmod{6} \\ E_8 E_6^{(k-8)/6} & \mbox{ if } p=3 \mbox{ and } k \equiv 2 \pmod{6} \\ E_4 E_6^{(k-4)/6} & \mbox{ if } p=3 \mbox{ and } k \equiv 4 \pmod{6} \\ E_{p-1}^{k/(p-1)} & \mbox{ if } p \ge 5 \end{array} \right.
$$
\end{dfn}

The following corollary follows immediately from Proposition \ref{prop:rate_of_oc_E/VE}.

\begin{cor}\label{cor:oc_F/V(F)} With notation as in Definition \ref{dfn:F_etc} we have that $\frac{F}{V(F)}$ is an overconvergent modular function and is in fact invertible in $M_0(\Z_p,\ge \rho )$.
\end{cor}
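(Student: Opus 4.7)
The approach is to expand $F/V(F)$ as a monomial in the elementary ratios $E_l/V(E_l)$ for $l\in \{4,6,p-1\}$. Since $V$ acts as $q\mapsto q^p$ on $q$-expansions, it is a ring homomorphism, so $V\bigl(\prod_l E_l^{m_l}\bigr) = \prod_l V(E_l)^{m_l}$. In each case of Definition~\ref{dfn:F_etc}, $F$ is already defined as such a monomial $\prod_l E_l^{m_l}$, and hence $F/V(F) = \prod_l (E_l/V(E_l))^{m_l}$.

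The second ingredient is the fact that $M_0(\Z_p,\ge \rho)$ is a $\Z_p$-algebra, so its set of units is closed under products and integer powers. It therefore suffices to verify, case by case, that every factor $E_l/V(E_l)$ occurring in the expansion of $F/V(F)$ is a unit in $M_0(\Z_p,\ge \rho)$. For $p\in \{2,3\}$, this is precisely what Proposition~\ref{prop:rate_of_oc_E/VE} provides, and the value of $\rho$ in Definition~\ref{dfn:F_etc} has been set up so as to equal the minimum of the rates of overconvergence available for the $E_4/V(E_4)$ and $E_6/V(E_6)$ factors actually occurring in $F$. For example, when $p=2$ and $k\equiv 2\pmod 4$, the presence of an $E_6/V(E_6)$ factor forces $\rho = \frac{1}{2p}$, whereas when $k\equiv 0 \pmod 4$ only $E_4/V(E_4)$ factors occur, allowing the better value $\rho = \frac{1}{p+1}$. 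For $p\ge 5$, the only factor to consider is $E_{p-1}/V(E_{p-1})$, which is a unit in $M_0(\Z_p,\ge \frac{1}{p+1})$ by the standard theory of Hasse invariant lifts (cf.\ the discussion in \cite{kr}).

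The only point requiring a small additional remark is the case $p=3,\ k\equiv 2\pmod 6$, where $F$ involves the factor $E_8$. Here one invokes the classical identity $E_8 = E_4^2$ (valid because $M_8(\SL_2(\Z))$ is one-dimensional and both sides have constant term $1$) to rewrite $E_8/V(E_8) = (E_4/V(E_4))^2$, thereby bringing this case also within the scope of Proposition~\ref{prop:rate_of_oc_E/VE}.

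There is no substantive obstacle here: the proof amounts to a case-by-case bookkeeping check confirming that $\rho$ as defined in Definition~\ref{dfn:F_etc} coincides with the minimum rate of overconvergence among the elementary factors of $F/V(F)$.
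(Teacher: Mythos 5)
Your proposal is correct and follows essentially the same route as the paper: write $F/V(F)$ as a product of the elementary ratios $E_l/V(E_l)$ using the multiplicativity of $V$, invoke Proposition \ref{prop:rate_of_oc_E/VE} for $p=2,3$ (with the identity $E_8=E_4^2$ handling the one case the paper leaves implicit) and the Coleman--Wan fact for $E_{p-1}/V(E_{p-1})$ when $p\ge 5$, and conclude via the algebra structure of $M_0(\Z_p,\ge\rho)$. Your case-by-case verification that $\rho$ matches the minimum rate among the factors is just a more explicit version of what the paper's proof asserts.
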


\begin{proof} The form $F$ is in all cases a lift of a power of the Hasse invariant, and so the function $\frac{F}{V(F)}$ is defined on the ordinary locus and is thus overconvergent for some positive rate of overconvergence. That it is in fact a unit of $M_0(\Z_p,\ge \rho )$ follows for $p=2,3$ immediately from Proposition \ref{prop:rate_of_oc_E/VE} using the fact the $V$ is multiplicative. For $p\ge 5$ the statement follows from the ``Coleman--Wan theorem'', \cite[Lemma 2.1]{wan}.
\end{proof}

\begin{cor}\label{cor:U(F)/F_oc} Let $F$, $\rho$ and $t$ be as in Definition \ref{dfn:F_etc}. Then
$$
\frac{U(F)}{F} \in \frac{1}{p} \cdot M_0(\Z_p,\ge p\rho ) .
$$
\end{cor}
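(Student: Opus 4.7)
The plan is to reduce this to Proposition \ref{prop:U_integrality} via the standard identity $U(V(F)\cdot h) = F\cdot U(h)$, which follows immediately from unravelling the definitions of $U$ and $V$ on $q$-expansions. Applying this identity with $h = F/V(F)$ (which is a genuine $p$-adic modular function of weight $0$) gives
$$
U(F) \;=\; U\!\left(V(F) \cdot \frac{F}{V(F)}\right) \;=\; F \cdot U\!\left(\frac{F}{V(F)}\right),
$$
so that
$$
\frac{U(F)}{F} \;=\; U\!\left(\frac{F}{V(F)}\right).
$$
Thus the claim reduces to controlling the rate of overconvergence of $U$ applied to $F/V(F)$.

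Next I would invoke Corollary \ref{cor:oc_F/V(F)}, which tells us precisely that $F/V(F)$ (being a unit of $M_0(\Z_p, \ge \rho)$) lies in $M_0(\Z_p, \ge \rho)$. To apply Proposition \ref{prop:U_integrality} to this element with parameter $\s = \rho$, one only needs the inequality $\rho \le \frac{1}{p+1}$; and this is automatic in both cases of the definition of $\rho$, since $\frac{1}{2p} \le \frac{1}{p+1}$ for every prime $p$. Proposition \ref{prop:U_integrality} then yields
$$
U\!\left(\frac{F}{V(F)}\right) \;\in\; \frac{1}{p}\cdot M_0(\Z_p,\ge p\rho),
$$
which, combined with the displayed identity above, is exactly the statement of the corollary.

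There is essentially no obstacle here: the whole proof is a one-line manipulation that turns the ratio $U(F)/F$ into $U$ applied to the already-controlled function $F/V(F)$, and then cites the two previously established results. The only routine check is the inequality $\rho \le \frac{1}{p+1}$ needed to trigger Proposition \ref{prop:U_integrality}.
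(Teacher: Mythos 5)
Your proof is correct and follows exactly the paper's own argument: rewrite $\frac{U(F)}{F}$ as $U\bigl(\frac{F}{V(F)}\bigr)$ via Coleman's trick $U(SV(T))=U(S)T$, then apply Corollary \ref{cor:oc_F/V(F)} and Proposition \ref{prop:U_integrality}, noting $\rho \le \frac{1}{p+1}$. No issues.
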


\begin{proof} As $\frac{F}{V(F)} \in M_0(\Z_p, \ge \rho)$ by Corollary \ref{cor:oc_F/V(F)}, we find by using the integrality properties of $U$, cf.\ Proposition \ref{prop:U_integrality}, that
$$
\frac{U(F)}{F} = U \left( \frac{F}{V(F)} \right) \in \frac{1}{p} \cdot M_0(\Z_p, \ge p\rho)
$$
where we used that $\rho \le 1/(p+1)$ as well as ``Coleman's trick", i.e., the identity $U(SV(T)) = U(S)T$.
\end{proof}

The following statement is an extension of a statement used in the proof of \cite[Lemma 3.11]{kr}. The argument is a slight extension of the argument in the proof of \cite[Lemma 3.11]{kr}, so we will be brief.

\begin{prop}\label{prop:eisenstein_congr} With notation as in Definition \ref{dfn:F_etc} we have
$$
\Es_k \equiv E_k \equiv F \pmod{p^t}
$$
in $q$-expansion.

As a consequence we have
$$
\frac{\Es_k}{F} \equiv \frac{U^i(F)}{F} \equiv 1 \pmod{p^t} .
$$
for all $i\in\N$.
\end{prop}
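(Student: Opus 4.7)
The plan is to establish the two congruences $\Es_k \equiv E_k \pmod{p^t}$ and $E_k \equiv F \pmod{p^t}$ separately, and then deduce both halves of the consequence from them together with the identity $U(\Es_k) = \Es_k$.

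For the first congruence, the defining formula gives
$$
\Es_k - E_k = \frac{p^{k-1}(E_k - V(E_k))}{1 - p^{k-1}},
$$
so $\Es_k \equiv E_k \pmod{p^{k-1}}$. In each branch of Definition \ref{dfn:F_etc} one then verifies $k - 1 \geq t$ using $k \geq 4$ and $v_p(k) \leq \log_p k$, with equality only in the special case $p=2, k=4$. This step is purely formal.

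The second congruence is the content of the proposition and a direct generalization of \cite[Lemma 3.11]{kr}. For $p \geq 5$ the statement $E_k \equiv E_{p-1}^{k/(p-1)} \pmod{p^{2+v_p(k)}}$ is proved in loc.\ cit.\ by coefficient-wise comparison of $q$-expansions: the $n$-th coefficient of $E_k$ is $-\frac{2k}{B_k}\sigma_{k-1}(n)$, and one combines (i) the Kummer congruences for $B_k/k$, (ii) the von Staudt--Clausen theorem, which shows that $-\frac{2k}{B_k}$ has $p$-adic valuation exactly $1+v_p(k)$ when $(p-1)\mid k$, and (iii) the elementary Fermat-type fact that $d^{k-1} \equiv d^{k'-1} \pmod{p^{v_p(k)+1}}$ for $p \nmid d$ whenever $k \equiv k' \pmod{(p-1)p^{v_p(k)}}$. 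For $p \in \{2,3\}$ the same coefficient-by-coefficient strategy applies, now comparing $E_k$ to the monomial $F$ in $E_4, E_6, E_8$ of weight $k$; the extra gain at $p=2$ comes from $v_2(-2w/B_w) = 2+v_2(w)$ for $w \in \{4,6\}$ (as opposed to $v_p(-2w/B_w) = 1+v_p(w)$ for $p \geq 3$), which accounts for the ``$3$'' in $t = 3+v_2(k)$ versus the ``$2$'' in $t = 2+v_p(k)$. Establishing the exact exponent $t$ across the several case branches is the main obstacle and is precisely what extends the argument of \cite{kr}.

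Combining the two congruences gives $\Es_k \equiv F \pmod{p^t}$. Since $F$ is a product of normalized Eisenstein series, its $q$-expansion lies in $1 + q\Z_p[[q]]$ and is thus invertible there, so dividing yields $\Es_k/F \equiv 1 \pmod{p^t}$. For the second half of the consequence, I first observe that $U(\Es_k) = \Es_k$: from the relation $T_p = U + p^{k-1}V$ on level $1$ forms of weight $k$, the Hecke eigenvalue $T_p(E_k) = (1+p^{k-1})E_k$, and the identity $UV = \mathrm{id}$, a direct computation gives $U((1-p^{k-1})\Es_k) = U(E_k) - p^{k-1}E_k = (1-p^{k-1})\Es_k$. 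Hence $U^i(\Es_k) = \Es_k$ for all $i \geq 1$. Since $U$ acts on $q$-expansions by $\sum a_n q^n \mapsto \sum a_{pn} q^n$, it preserves congruences modulo $p^t$ coefficient-wise, and therefore
$$
U^i(F) \equiv U^i(\Es_k) = \Es_k \equiv F \pmod{p^t};
$$
dividing by $F$ gives $U^i(F)/F \equiv 1 \pmod{p^t}$.
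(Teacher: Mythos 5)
Your proposal is correct and follows essentially the same route as the paper: reduce to $E_k\equiv F\pmod{p^t}$ via $\Es_k\equiv E_k\pmod{p^{k-1}}$ (with the special case $p=2$, $k=4$), prove that congruence coefficient-by-coefficient using von Staudt--Clausen and Fermat-type congruences together with a binomial/Kummer-style estimate for the power of the base Eisenstein series, and then deduce the consequence from $U(\Es_k)=\Es_k$ and the fact that $U$ preserves $q$-expansion congruences. Your explicit verification of $U(\Es_k)=\Es_k$ via $T_p=U+p^{k-1}V$ is a small addition the paper takes for granted, and like the paper you leave the case-by-case bookkeeping for $p=2,3$ largely as a (correctly identified) computation.
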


\begin{proof} Notice first that
$$
\Es_k = \frac{E_k - p^{k-1}V(E_k)}{1 - p^{k-1}} \equiv E_k \pmod{p^{k-1}} .
$$

As elementary considerations show that $k-1 \ge 3+v_p(k)$ holds except when $p=2$, $k=4$, in which case we have $F= E_4 \equiv \Es_4 \pmod{2^3}$, we see that it suffices to prove the congruence $E_k \equiv F \pmod{p^t}$ in the various cases. These congruences follow readily by considering the explicit formulas for the $q$-expansions of Eisenstein series:
$$
E_m(q) = 1 - \frac{2m}{B_m} \sum_{i\ge 1} \sigma_{m-1}(i) q^i ,
$$
applying the elementary congruence
$$
(1+ap^s)^n \equiv 1 + nap^s \pmod{p^{1+s+v_p(n)} R}
$$
valid for $s,n\in\N$ and $a$ in any commutative ring $R$, and using the von Staudt--Clausen theorem as well as Fermat's little theorem.

Let us first consider the case $p\ge 5$ and $k\in\N$ divisible by $p-1$. As $B_{p-1}, B_k \equiv -\frac{1}{p} \pmod{\Z_p}$ by the von Staudt--Clausen theorem, we also have $\frac{1}{B_{p-1}}, \frac{1}{B_k} \equiv -p \pmod{p^2 \Z_p}$. The elementary congruence above then shows that
$$
E_{p-1}^{k/(p-1)} \equiv 1 - \frac{2k}{B_{p-1}} \sum_{i\ge 0} \sigma_{p-2}(i) q^i \pmod{p^{2+v_p(k)}} ,
$$
and so the claim follows if we can show that $\frac{2}{B_{p-1}} \sigma_{p-2}(i) \equiv \frac{2}{B_k} \sigma_{k-1}(i) \pmod{p^2}$ for any $i\ge 0$. But that follows again from $\frac{1}{B_{p-1}}, \frac{1}{B_k} \equiv -p \pmod{p^2 \Z_p}$ and from Fermat's little theorem that implies $\sigma_{p-2}(i) \equiv \sigma_{k-1}(i) \pmod{p}$ (as $k$ is divisible by $p-1$.)

The congruences for $p=2,3$ and $k$ divisible by $p$ follow from  $E_4 = 1 + 240 \sum_{i\ge 1} \s_3(i)q^i$ and $E_6 = 1 + 504 \sum_{i\ge 1} \s_5(i) q^i$ in the same way as above.

The congruence for $p=2$ and $k\equiv 2 \pmod{4}$ follows in the same way after noting that $E_4 \equiv 1 \pmod{2^4}$ and that in this case, $3+v_2(k)=4$.

When $p=3$ and $k\equiv 2 \pmod{6}$ one finds $E_8 \equiv E_k \pmod{3^2}$, and the desired congruences follows as $E_6 \equiv 1 \pmod{3^2}$. Finally, when $p=3$ and $k\equiv 4 \pmod{6}$, the desired congruence follows in the same way by replacing $E_8$ by $E_4$.

Finally, the second statement of the Proposition follows: as the $U$ operator preserves congruences in $q$-expansions, and as $U(\Es_k) = \Es_k$, we have
$$
U(F) \equiv U(\Es_k) = \Es_k \equiv F \pmod{p^t},
$$
and so $U^i(F) \equiv \Es_k \equiv F \pmod{p^t}$ for all $i\in\N$ by induction on $i$.
\end{proof}

\begin{remark} Numerical experimentation suggests that the value given for $t$ in Proposition \ref{prop:eisenstein_congr} is optimal in some, but not all cases. It seems that improving the value of $t$ would require other considerations than the ones employed here. We have not pursued this.
\end{remark}

If $p\ge 5$ the statement of the following lemma was already established in the proof of \cite[Lemma 3.12]{kr}, and we can thus limit ourselves to the cases $p=2,3$ in the proof.

\begin{lem}\label{lem:U^i(F)/F-U(F)/F} Let $F$ and $\rho$ be as in Definition \ref{dfn:F_etc}. For $i\in \N$ we have
$$
\frac{U^i(F)}{F} - \frac{U(F)}{F} \in M_0(\Z_p, \ge p\rho) .
$$
\end{lem}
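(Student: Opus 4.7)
The plan is to run an induction on $i \ge 1$ that simultaneously establishes both the lemma's conclusion
$$
\text{(a)} \quad \frac{U^i(F)}{F} - \frac{U(F)}{F} \in M_0(\Z_p, \ge p\rho)
$$
and the auxiliary strengthening
$$
\text{(b)} \quad \frac{U^i(F)}{F} - 1 \in p \cdot M_0(\Z_p, \ge \rho).
$$
The workhorse is Coleman's trick $U(S \cdot V(T)) = U(S) \cdot T$: writing $F = h \cdot V(F)$ where $h := F/V(F)$ is a unit of $M_0(\Z_p, \ge \rho)$ by Corollary \ref{cor:oc_F/V(F)}, one obtains the recursion
$$
\frac{U^{i+1}(F)}{F} = U\!\left( h \cdot \frac{U^i(F)}{F} \right),
$$
so in particular $U(F)/F = U(h)$.

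For the base case $i = 1$, statement (a) is vacuous. For (b), I combine $U(F)/F = U(h) \in \frac{1}{p} M_0(\Z_p, \ge p\rho)$ (Proposition \ref{prop:U_integrality}, which applies since $\rho \le 1/(p+1)$) with the congruence $U(F)/F \equiv 1 \pmod{p^t}$ from Proposition \ref{prop:eisenstein_congr}. Expanding $U(F)/F = 1 + \sum_{j \ge 1} a_j f_p^j$ in Loeffler's basis and using that $f_p \in q + q^2 \Z_p[[q]]$ so that $q$-expansion congruences transfer directly to the $a_j$, I get two lower bounds
$$
v_p(a_j) \ge \frac{12 p \rho}{p-1}\cdot j - 1 \quad \text{and} \quad v_p(a_j) \ge t.
$$
By Proposition \ref{prop:oc_vs_loeffler}, (b) amounts to $v_p(a_j) \ge \frac{12 \rho}{p-1}\cdot j + 1$ for $j \ge 1$; the first of the two bounds delivers this whenever $12 \rho j \ge 2$, and since $p \in \{2, 3\}$ forces $\rho \ge 1/(2p) \ge 1/6$, this holds for every $j \ge 1$.

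For the inductive step, assume (b) at $i-1$. The recursion above gives
$$
\frac{U^i(F)}{F} - \frac{U(F)}{F} = U\!\left( h \cdot \left( \frac{U^{i-1}(F)}{F} - 1 \right) \right),
$$
and by (b) at $i-1$ together with $h \in M_0(\Z_p, \ge \rho)$ the argument of $U$ lies in $p \cdot M_0(\Z_p, \ge \rho)$. Proposition \ref{prop:U_integrality} thus yields (a) at $i$. To propagate (b), I combine (a) at $i$ with the $q$-expansion congruence $U^i(F)/F - U(F)/F \equiv 0 \pmod{p^t}$ (both terms are $\equiv 1 \pmod{p^t}$) and run a similar boosting in Loeffler's basis: here the overconvergence bound $\frac{12 p \rho}{p-1}\cdot j$ on the $j$-th $f_p$-coefficient of the difference already exceeds the target $\frac{12 \rho}{p-1}\cdot j + 1$ whenever $12 \rho j \ge 1$, which again holds trivially for $j \ge 1$ and $p \in \{2, 3\}$. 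Hence $U^i(F)/F - U(F)/F \in p \cdot M_0(\Z_p, \ge \rho)$, and adding (b) at $i = 1$ produces (b) at $i$.

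The main obstacle is this boosting step, which trades one power of $p$ against a drop in rate from $p\rho$ to $\rho$; the sharp input is the inequality $12 \rho j \ge 2$ for $j \ge 1$ used in the base case, equivalent to $\rho \ge 1/6$, and it is precisely this inequality that pins the method to $p \in \{2, 3\}$. For $p \ge 5$ the corresponding statement is proved in \cite[Lemma 3.12]{kr} via Katz expansions, which is consistent with the authors' restriction of the present proof to $p \in \{2, 3\}$.
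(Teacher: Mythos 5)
Your proof is correct and runs along essentially the same lines as the paper's: an induction whose engine is the ``boosting'' step that trades the integrality defect $\frac{1}{p}$ coming from Proposition \ref{prop:U_integrality} against the drop in rate from $p\rho$ to $\rho$, followed by multiplication by the unit $\frac{F}{V(F)}$ and one further application of $U$ via Coleman's trick; carrying your auxiliary statement (b) through the induction, rather than rederiving it inside each inductive step as the paper does, is an immaterial reorganization. The one substantive difference is your observation that, because $\rho \ge \frac{1}{2p} \ge \frac{1}{6}$ for $p\in\{2,3\}$, the single lower bound $v_p(a_j) \ge \frac{12}{p-1}\,p\rho\, j - 1$ already dominates the target $\frac{12}{p-1}\,\rho\, j + 1$ for every $j\ge 1$ (with equality in the tightest case $p=3$, $\rho=\frac{1}{6}$, $j=1$), so that the congruence $\frac{U^i(F)}{F}\equiv 1\pmod{p^t}$ from Proposition \ref{prop:eisenstein_congr} is not actually needed in this lemma. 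The paper instead covers small $j$ by the congruence bound $v_p(a_j)\ge t$ and large $j$ by the overconvergence bound, arriving at the sufficient condition $(t-1)(p-1)\ge 2$ and remarking that $t\ge 3$ is needed for $p=2$; your computation shows that this extra input is superfluous here (it remains essential later, in the second statement of Proposition \ref{prop:Es_vs_F}, where the factor $\frac{t}{t+1}$ is extracted), which is a small but genuine streamlining.
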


\begin{proof} As in the proof of \cite[Lemma 3.12]{kr} for $p\ge 5$ the point of the proof is to combine known integrality properties for the $U$ operator with the congruence of $\frac{U^i(F)}{F} \equiv 1 \pmod{p^t}$ from Proposition \ref{prop:eisenstein_congr}. As it turns out, we need $t\ge 2$ for $p=3$ (as in the cases $p\ge 5$), but for $p=2$ it is necessary to have $t\ge 3$. We do have that by the Definition \ref{dfn:F_etc}. We assume now that $p\in \{2,3\}$.

The statement is proved by induction on $i$. The induction start $i=1$ being trivial, we assume the statement for some $i\in \N$:
$$
\frac{U^i(F)}{F} \in \frac{U(F)}{F} + M_0(\Z_p, \ge p\rho) .
$$

As $\frac{U(F)}{F} \in \frac{1}{p} \cdot M_0(\Z_p, \ge p\rho)$ by Corollary \ref{cor:U(F)/F_oc}, we then certainly also have $\frac{U^i(F)}{F} \in \frac{1}{p} \cdot M_0(\Z_p, \ge p\rho)$. Expanding the modular function $\frac{U^i(F)}{F} = 1 + \sum_{m\ge 1} a_m f_p^m$, and using Proposition \ref{prop:oc_vs_loeffler} we then know that $v_p(a_m) \ge -1 + \frac{12}{p-1} p\rho m$ for all $m\ge 1$.

We claim that $\frac{U^i(F)}{F} - 1 \in p\cdot M_0(\Z_p, \ge \rho)$:

With $t$ as in Definition \ref{dfn:F_etc} we have by Proposition \ref{prop:eisenstein_congr} that $\frac{U^i(F)}{F} - 1 \equiv 0 \pmod{p^t}$ in $q$-expansion. It follows that $v_p(a_m) \ge t$ for all $m\ge 1$ as is easily seen from the fact that the $q$-expansion of $f_p$ starts with $q$.

Now, to show that $\frac{U^i(F)}{F} - 1 \in p\cdot M_0(\Z_p, \ge \rho)$ we must show that $v_p(a_m) \ge 1 + \frac{12}{p-1} \rho m$ for all $m\ge 1$ (cf.\ again Proposition \ref{prop:oc_vs_loeffler}.) It suffices to show that for all $m\ge 1$ we either have $-1 + \frac{12}{p-1} p\rho m \ge 1 + \frac{12}{p-1} \rho m$ or $t\ge 1 + \frac{12}{p-1} \rho m$. The first of these inequalities holds for $m\ge \frac{1}{6\rho}$ while the second holds for $m\le \frac{(t-1)(p-1)}{12\rho}$. Hence, it is sufficient to have
$$
\frac{(t-1)(p-1)}{12\rho} \ge \frac{1}{6\rho},
$$
i.e., $(t-1)(p-1) \ge 2$. But this is true as $t\ge 3$ when $p=2$, and $t\ge 2$ when $p=3$.

Now, combining the statement $\frac{U^i(F)}{F} - 1 \in p\cdot M_0(\Z_p, \ge \rho)$ with Corollary \ref{cor:oc_F/V(F)} we find that
$$
\frac{U^i(F)}{V(F)} - \frac{F}{V(F)} = \left( \frac{U^i(F)}{F} - 1 \right) \cdot \frac{F}{V(F)} \in p\cdot M_0(\Z_p, \ge \rho) .
$$

Applying $U$ we can then conclude that
$$
\frac{U^{i+1}(F)}{F} - \frac{U(F)}{F} = U \left( \frac{U^i(F)}{V(F)} \right) - U \left( \frac{F}{V(F)} \right) \in M_0(\Z_p, \ge p\rho)
$$
which completes the induction step.
\end{proof}

\begin{prop} \label{prop:Es_vs_F} Let $k$, $F$, $\rho$, at $t$ be as in Definition \ref{dfn:F_etc}.

Then $\frac{\Es_k}{F} \in \frac{1}{p} M_0(\Z_p,\ge p\rho)$.

We also have that $\frac{\Es_k}{F}$ is invertible in $M_0(\Z_p,\ge \frac{t}{t+1}\cdot p\rho)$.
\end{prop}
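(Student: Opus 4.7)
I would approach Proposition \ref{prop:Es_vs_F} by a bootstrap on the rate of overconvergence of $g := \Es_k/F$, using the fixed-point equation derived from $U(\Es_k) = \Es_k$ together with Coleman's trick, and leveraging the congruence $\Es_k \equiv F \pmod{p^t}$ of Proposition \ref{prop:eisenstein_congr} to absorb the factor $1/p$ produced at each application of Proposition \ref{prop:U_integrality}.

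First, $g$ is a priori overconvergent with some positive rate $\sigma_0 \in (0, 1/(p+1)]$, since $F$ and $\Es_k$ both lift powers of the Hasse invariant, so $g$ is defined on the ordinary locus. Coleman's trick $U(SV(T)) = U(S)T$ with $S = \Es_k$ and $T = 1/F$, combined with $\Es_k = U(\Es_k)$, yields the fixed-point identity
$$
g \;=\; U\!\left(g \cdot \frac{F}{V(F)}\right).
$$
Given $g \in M_0(\Z_p, \ge \sigma)$ for some $\sigma \in (0, 1/(p+1)]$, Corollary \ref{cor:oc_F/V(F)} gives $g \cdot F/V(F) \in M_0(\Z_p, \ge \min(\sigma, \rho))$, and Proposition \ref{prop:U_integrality} then yields $g \in \tfrac{1}{p} M_0(\Z_p, \ge p\min(\sigma, \rho))$. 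The key move is to combine this with the $q$-expansion congruence $g \equiv 1 \pmod{p^t}$ (also from Proposition \ref{prop:eisenstein_congr}) to upgrade it to $g \in M_0(\Z_p, \ge \tfrac{t}{t+1} p\min(\sigma, \rho))$: expanding $g = \sum_m a_m f_p^m$ in the Loeffler basis (for $p \in \{2,3\}$) or via the Katz expansion (for $p \ge 5$), one obtains for $m \ge 1$ the two lower bounds $v_p(a_m) \ge -1 + c p\min(\sigma, \rho) m$ and $v_p(a_m) \ge t$, where $c = 12/(p-1)$ in the Loeffler case and $c = 1$ in the Katz case; their envelope stays above $c\sigma' m$ exactly when $\sigma' \le \tfrac{t}{t+1} p\min(\sigma, \rho)$, in direct analogy with the crossover argument in the proof of Lemma \ref{lem:U^i(F)/F-U(F)/F}. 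Iterating $\sigma \mapsto \tfrac{t}{t+1} p\min(\sigma, \rho)$ multiplies $\sigma$ by the factor $\tfrac{t}{t+1} p > 1$, which holds in every case of Definition \ref{dfn:F_etc}, so after finitely many steps $\sigma \ge \rho$. The penultimate step gives the first assertion $\Es_k/F \in \tfrac{1}{p} M_0(\Z_p, \ge p\rho)$ directly from $U$-integrality, and the subsequent crossover yields $\Es_k/F \in M_0(\Z_p, \ge \tfrac{t}{t+1} p\rho)$.

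For invertibility, set $G := g - 1 \in M_0(\Z_p, \ge \tfrac{t}{t+1} p\rho)$; since $g \equiv 1 \pmod{p^t}$ in $q$-expansion and $t \ge 2$, we have $G \equiv 0 \pmod{p^t}$. The partial sums $S_N := \sum_{n=0}^{N} (-G)^n$ lie in the $\Z_p$-algebra $M_0(\Z_p, \ge \tfrac{t}{t+1} p\rho)$ and converge in $q$-expansion to $1/g = F/\Es_k$ because $G^{N+1} \equiv 0 \pmod{p^{t(N+1)}}$. Proposition \ref{prop:conv_in_qexp} then places $F/\Es_k$ in the same space, proving $g$ is a unit. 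The principal obstacle is the crossover bookkeeping --- precisely interleaving the two lower bounds on the Loeffler/Katz coefficients to hit the target rate $\tfrac{t}{t+1} p\rho$, together with verifying that the iteration actually terminates, which reduces to confirming the strict inequality $\tfrac{t}{t+1} p > 1$ in every case covered by Definition \ref{dfn:F_etc}.
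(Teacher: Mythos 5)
Your argument is correct in substance, but it follows a genuinely different route from the paper's. The paper proves the first assertion (for $p\in\{2,3\}$; for $p\ge 5$ it simply cites \cite[Theorem C]{kr}) by observing that $\Es_k-F$ is a cuspidal Serre $p$-adic modular form, invoking Serre's Lemme 3 (which needs $p\le 7$) to get $U^i(F)/F\to\Es_k/F$ in $q$-expansion, controlling the whole sequence $U^i(F)/F$ uniformly via Lemma \ref{lem:U^i(F)/F-U(F)/F}, and passing to the limit with Proposition \ref{prop:conv_in_qexp}; the crossover between the bounds $v_p(a_m)\ge -1+\frac{12}{p-1}p\rho m$ and $v_p(a_m)\ge t$ is then performed once, at the end. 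You instead bootstrap directly on $g=\Es_k/F$ via the fixed-point identity $g=U\bigl(g\cdot\frac{F}{V(F)}\bigr)$, applying the crossover at every step of the iteration $\sigma\mapsto\frac{t}{t+1}p\min(\sigma,\rho)$ and using $\frac{t}{t+1}p>1$ to terminate. This buys uniformity in $p$ (no appeal to Serre's lemma, no separate citation for $p\ge 5$, no need for Lemma \ref{lem:U^i(F)/F-U(F)/F} or the $q$-expansion limit in the first part), at the cost of needing a legitimate integral starting point: you should justify that $g\in M_0(\Z_p,\ge\sigma_0)$ for some $\sigma_0>0$, since a priori overconvergence over $\Q_p$ only gives $v_p(a_m)\ge\frac{12}{p-1}\tau m-C$ for some constant $C$, and one must use $\Z_p$-integrality together with the congruence $v_p(a_m)\ge t$ for $m\ge 1$ to absorb $C$ for small $m$ by shrinking $\sigma_0$ --- a fillable gap, at about the level of rigor of the paper's own Corollary \ref{cor:oc_F/V(F)}. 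For invertibility the paper rescales by an auxiliary element $a$ over a field extension and lets $u\to\frac{t}{t+1}p\rho$, whereas you sum the geometric series $\sum_n(-G)^n$ with $G=g-1\equiv 0\pmod{p^t}$ and apply Proposition \ref{prop:conv_in_qexp}; both work, and yours is arguably the cleaner of the two.
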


\begin{proof} If $p\ge 5$ the first statement was proved in \cite{kr}, cf.\ Theorem C of that paper so we will assume $p\in \{2,3\}$. Since $\Es_k - F$ is seen in all cases to be a cuspidal, Serre $p$-adic modular form of weight $k$, and as $p\le 7$, it follows from \cite[Lemme 3]{serre_zeta} that $U^i (\Es_k - F) \rightarrow 0$ in $q$-expansion as $i\rightarrow \infty$. But, $U(\Es_k) = \Es_k$, and so we have $U^i(F) \rightarrow \Es_k$, and hence also
$$
\frac{U^i(F)}{F} \rightarrow \frac{\Es_k}{F}
$$
in $q$-expansion as $i\rightarrow \infty$. As convergence in $q$-expansion of elements that are in $M_0(\Z_p, \ge p\rho)$ implies that the limit is also in this space, cf.\ Proposition \ref{prop:conv_in_qexp}, we can then conclude from Lemma \ref{lem:U^i(F)/F-U(F)/F} that
$$
\frac{\Es_k}{F} - \frac{U(F)}{F} \in M_0(\Z_p, \ge p\rho) .
$$

That $\frac{\Es_k}{F} \in \frac{1}{p} M_0(\Z_p,\ge p\rho)$ now follows immediately from the first statement of Corollary \ref{cor:U(F)/F_oc}.

For the second statement of the proposition, suppose again first that $p\in \{2,3\}$ and consider the expansion $\frac{\Es_k}{F} = 1 + \sum_{m\ge 1} a_m f_p^m$. By Proposition \ref{prop:oc_vs_loeffler} we now know that $v_p(a_m) \ge -1 + \frac{12}{p-1} \cdot p\rho m$ for all $m\ge 1$. On the other hand, the congruence $\frac{\Es_k}{F} \equiv 1 \pmod{t}$ from Proposition \ref{prop:eisenstein_congr} implies that $v_p(a_m) \ge t$ for all $m\ge 1$. Combining these two estimates, one sees that $v_p(a_m) \ge \frac{12}{p-1} \cdot \frac{t}{t+1} \cdot p\rho m$ for all $m\ge 1$, and this means that
$$
\frac{\Es_k}{F} \in M_0\left(\Z_p,\ge \frac{t}{t+1} \cdot p\rho \right) .
$$

To see that $\frac{\Es_k}{F}$ is in fact invertible in $M_0(\Z_p,\ge \frac{t}{t+1} \cdot p\rho )$, we can borrow an argument from the proof of \cite[Corollary 4.1]{kr}: let $0\le u < \frac{t}{t+1} \cdot p\rho$ be rational and choose a finite extension $K/\Q_p$ with ring of integers $O_K$ such that there are elements $a,r\in O_K$ with
$$
v_p(a) = \frac{6}{p-1} \cdot (\frac{t}{t+1} p\rho - u), \quad v_p(r) = u .
$$

Then $\frac{\Es_k}{F} = 1 + \sum_{m\ge 1} a_m f_p^m = 1 + a \cdot \sum_{m\ge 1} (a_m a^{-1}) f_p^m$, and as we find that
$$
v_p(a_m a^{-1}) - \frac{12}{p-1} v_p(r)m \ge (2m-1) v_p(a)
$$
is positive and goes to $\infty$ with $m$, we can conclude that $h:=\sum_{m\ge 1} (a_m a^{-1}) f_p^m$ is an element of $M_0(O_K,r)$. Since $\frac{\Es_k}{F} = 1 + a \cdot h$ with $v_p(a)>0$, this means that $\frac{\Es_k}{F}$ is invertible in $M_0(O_K,r)$. Thus, if we consider the expansion $\frac{F}{\Es_k} = 1 + \sum_{m\ge 1} b_m f_p^m$ we must have
$$
v_p(b_m) \ge \frac{12}{p-1} \cdot v_p(r) m = \frac{12}{p-1} \cdot u m
$$
for all $m\ge 1$. As we can choose $u$ arbitrarily close to $\frac{t}{t+1} \cdot p\rho$, we must have $v_p(b_m) \ge \frac{12}{p-1} \cdot \frac{t}{t+1} \cdot p\rho$ for all $m\ge 1$, and this means $\frac{F}{\Es_k} \in M_0(\Z_p, \ge \frac{t}{t+1} \cdot p\rho )$.

For $p\ge 5$ we can use the same arguments, working with the Katz expansion of $\frac{\Es_k}{F}$ instead of expansions in the uniformizer $f_p$.
\end{proof}

\begin{proof}[Proof of Theorem \ref{thm:main}] By Proposition \ref{prop:Es_vs_F} and the well-known properties of the Fro\-benius operator $V$, we find that $\frac{V(\Es_k)}{V(F)} = V\left( \frac{\Es_k}{F} \right)$ is invertible in $M_0(\Z_p,\ge \frac{t}{t+1} \cdot \rho)$. Combining this with Proposition \ref{prop:Es_vs_F} again, it follows that
$$
\frac{\Es_k}{V(\Es_k)} = \frac{\Es_k}{F} \cdot \left( \frac{V(\Es_k)}{V(F)} \right)^{-1} \cdot \frac{F}{V(F)} \in M_0(\Z_p,\ge \frac{t}{t+1} \cdot \rho) ,
$$
since by Corollary \ref{cor:oc_F/V(F)} we have $\frac{F}{V(F)} \in M_0(\Z_p,\rho) \subseteq M_0(\Z_p,\ge \frac{t}{t+1} \cdot \rho)$.
\end{proof}

\subsection{Some special phenomena for \texorpdfstring{$p=2,3$}{p=2,3}}\label{subsec:special_p=2,3} The statement of the following proposition is well-known by work of Emerton, Buzzard--Kilford, and Roe; for $p=2$, cf.\ \cite{emerton_thesis} and specificially \cite[Corollary 9(iii)]{buzzard_kilford}, for $p=3$ this is in essence \cite[Theorem 4.2]{roe_3-adic}.

\begin{prop}\label{prop:special_E/V(E)} Let $p\in \{2,3\}$, and let $k\in\N$ be even and $\ge 4$. Then
$$
\frac{\Es_k}{V(\Es_k)} \in 1 + p^s M_0\left(\Z_p,\ge \frac{1}{2p} \right)
$$
where
$$
s = v_p(k/2) .
$$
\end{prop}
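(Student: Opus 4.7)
The plan is to run the same three-factor decomposition
$$
\frac{\Es_k}{V(\Es_k)}=\frac{\Es_k}{F}\cdot\left(\frac{V(\Es_k)}{V(F)}\right)^{-1}\cdot\frac{F}{V(F)}
$$
used in the proof of Theorem \ref{thm:main}, but to feed in a sharpened integrality property of $U$ that is available only for $p\in\{2,3\}$. Concretely, I would first establish that for $\s\le 1/(2p)$ the operator $U$ sends $M_0(\Z_p,\ge\s)$ into $M_0(\Z_p,\ge p\s)$ itself, i.e.\ with no loss of a factor $1/p$ as in Proposition \ref{prop:U_integrality}. This ``no denominator'' refinement can be read off from the matrix of $U$ in Loeffler's basis $\{f_p^i\}$ and is essentially the estimate underlying the spectral computations of \cite{buzzard_kilford} and \cite{roe_3-adic}. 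Because $1/(p+1)\ge 1/(2p)$ for $p=2,3$, Corollary \ref{cor:oc_F/V(F)} already places $F/V(F)$ in $M_0(\Z_p,\ge 1/(2p))$, and the improved $U$-bound then upgrades Corollary \ref{cor:U(F)/F_oc} to $U(F)/F\in M_0(\Z_p,\ge 1/2)$ with no pole.

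With that upgrade in hand, I would re-run Lemma \ref{lem:U^i(F)/F-U(F)/F} and Proposition \ref{prop:Es_vs_F} at rate $1/(2p)$. The induction step of Lemma \ref{lem:U^i(F)/F-U(F)/F} no longer needs to juggle two thresholds in $m$ since there is nothing to clear, and the limit argument of Proposition \ref{prop:Es_vs_F} (built on Proposition \ref{prop:conv_in_qexp} and the Serre-type convergence $U^i(F)\to\Es_k$) then gives $\Es_k/F\in M_0(\Z_p,\ge 1/(2p))$ cleanly. Forming the three-factor product yields $\Es_k/V(\Es_k)\in M_0(\Z_p,\ge 1/(2p))$ without the $t/(t+1)$ loss present in Theorem \ref{thm:main}.

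To upgrade membership to $1+p^s M_0(\Z_p,\ge 1/(2p))$ with $s=v_p(k/2)$, I would work modulo $p^{s+1}$ on each factor separately. For $F/V(F)$ the explicit formulas preceding Proposition \ref{prop:rate_of_oc_E/VE} make each of $E_4/V(E_4),\ E_6/V(E_6)$ congruent to $1$ modulo a large power of $p$; combining this with the elementary congruence $(1+p^a u)^n\equiv 1+np^a u\pmod{p^{2a+v_p(n)}}$ applied to the exponent $n$ appearing in the definition of $F$ produces exactly the $p^{v_p(k/2)}=p^s$ improvement. For the remaining factor the congruence $\Es_k\equiv F\pmod{p^t}$ of Proposition \ref{prop:eisenstein_congr}, combined with the elementary bound $t\ge s+1$ and the Loeffler-coefficient argument used in the second half of the proof of Proposition \ref{prop:Es_vs_F}, shows $\Es_k/F\in 1+p^t M_0(\Z_p,\ge 1/(2p))$; applying $V$ gives the same for the middle factor. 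Multiplying the three congruences gives the claim. The main obstacle is the very first step: the ``no $1/p$'' improvement of Proposition \ref{prop:U_integrality} for $p\in\{2,3\}$ at rate $\le 1/(2p)$ is the one place where the uniform method of Section \ref{subsec:proof_thm_A} has to be supplemented by a genuinely small-prime computation on $X_0(p)$, and everything downstream is routine valuation bookkeeping.
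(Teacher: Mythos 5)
Your overall strategy --- the three-factor decomposition, the ``no denominator'' refinement of Proposition \ref{prop:U_integrality} at rates $\le\frac{1}{2p}$ (this is exactly Proposition \ref{prop:special_U} of the paper), and the re-run of Lemma \ref{lem:U^i(F)/F-U(F)/F} and Proposition \ref{prop:Es_vs_F} at rate $\frac{1}{2p}$ --- is the paper's. The gap is in how you reinstate the $p^s$ prefactor at the end. For the factor $\frac{\Es_k}{F}$ you first prove plain membership in $M_0(\Z_p,\ge\frac{1}{2})$ and then try to bolt the prefactor on afterwards from the $q$-expansion congruence $\Es_k\equiv F\pmod{p^t}$. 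But for the Loeffler coefficients this only gives $v_p(a_m)\ge\max\bigl(t,\tfrac{12}{p-1}\cdot\tfrac{1}{2}m\bigr)$, and the maximum of two lower bounds is not their sum: the claimed intermediate statement $\frac{\Es_k}{F}\in 1+p^tM_0(\Z_p,\ge\frac{1}{2p})$, which would require $v_p(a_m)\ge t+\tfrac{12}{p-1}\cdot\tfrac{1}{2p}m$, does not follow (the ``Loeffler-coefficient argument'' of Proposition \ref{prop:Es_vs_F} trades the congruence for a better \emph{rate}, i.e.\ a factor $\frac{t}{t+1}$, not for a $p$-power prefactor at an unchanged rate). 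Even the weaker target $1+p^sM_0(\Z_p,\ge\frac{1}{2p})$ escapes this interpolation once $v_p(k)$ is large: for $p=2$ one needs $v_2(a_m)\ge s+3m$, while $\max(s+4,6m)$ falls short for all $m$ with $2\le m<s/3$, a nonempty range as soon as $s>6$.

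The paper avoids this by carrying the $p^s$ through the whole $U$-iteration from the very start. Lemma \ref{lem:special_F/V(F)} first gives $\frac{F}{V(F)}\in 1+p^sM_0(\Z_p,\ge\frac{1}{2p})$, proved by writing $\frac{E_{2p}}{V(E_{2p})}=1+bg$ with $v_p(b)=\frac{1}{p-1}$ and $g\in M_0(O_K,\ge\frac{1}{2p})$, and inducting on $v_p(m)$ for $(1+bg)^m$ \emph{inside} the overconvergent ring; note that your elementary congruence $(1+p^au)^n\equiv 1+np^au$ controls $q$-expansions but not rates of overconvergence, so it cannot by itself certify membership in $1+p^sM_0(\ge\frac{1}{2p})$ either. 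Then Proposition \ref{prop:special_U} and the induction of Lemma \ref{lem:U^i(F)/F-U(F)/F} preserve the shape $1+p^sM_0(\ge\frac{1}{2})$ at every step of the iteration, so the limit $\frac{\Es_k}{F}$ lands in $1+p^sM_0(\Z_p,\ge\frac{1}{2})$ directly and no a posteriori interpolation is needed. Your first step, and your identification of the strengthened $U$-integrality as the one genuinely small-prime input, are correct; what needs to change is that the bookkeeping of the congruence factor must be moved to the front of the argument rather than appended at the end.
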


Let us discuss briefly how this statement compares with Theorem \ref{thm:main}. Let us focus on the case where $p=2$: if we write $\frac{\Es_k}{V(\Es_k)} = 1 + \sum_{i\ge 1} a_i(k) f_2^i$ then Proposition \ref{prop:oc_vs_loeffler} tells us that the statement of Proposition \ref{prop:special_E/V(E)} translates to the statement that
$$
v_2(a_i(k)) \ge v_2(k/2) + 3i
$$
for all $i$ which is precisely what comes out of \cite[Corollary 9(iii)]{buzzard_kilford}. When $k\equiv 0 \pmod{4}$ the statement coming out of Theorem \ref{thm:main} is that
$$
v_2(a_i(k)) \ge 4 \cdot \frac{t}{t+1} \cdot i
$$
for all $i$ with $t$ as in the theorem. If also $k>4$ we have $t\ge 5$, and so the estimate from Theorem \ref{thm:main} is asymptotically better (w.r.t.\ $i$), though may be a bit worse for small $i$. However, when $k\equiv 2 \pmod{4}$ Theorem \ref{thm:main} is actually worse than Proposition \ref{prop:special_E/V(E)}. The same is true when $p=3$ and $3\nmid k$.

In other words, Theorem \ref{thm:main} is generally stronger than Proposition \ref{prop:special_E/V(E)} except in the cases $p=2,3$ and $k$ not divisible by $2p$.

This raises the question of how to view Proposition \ref{prop:special_E/V(E)} from the point of view of our uniform method of proof of Theorem \ref{thm:main}.

The purpose of this section is to answer this question. The answer we will give is that Proposition \ref{prop:special_E/V(E)} can still be obtained via our method, but we have to take some very special phenomena specifically for the primes $p=2,3$ into account (as do the works cited above.) The very special phenomena are related to stronger integrality properties of the $U$ operator when acting on overconvergent modular functions of sufficiently small rate of overconvergence:

\begin{prop}\label{prop:special_U} Let $p\in \{2,3\}$. Let $O_K$ be the ring of integers of a finite extension $K$ of $\Q_p$. Then the $U$ operator maps $M_0\left(O_K,\ge \frac{1}{2p} \right)$ to $M_0\left(O_K,\ge \frac{1}{2} \right)$.
\end{prop}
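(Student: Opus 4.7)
The plan is to reduce the claim to an explicit integrality statement about the matrix of $U$ in the Loeffler basis $\{f_p^i\}_{i\ge 0}$. Writing $g = \sum_i a_i f_p^i \in M_0(O_K, \ge 1/(2p))$, Proposition \ref{prop:oc_vs_loeffler} translates the hypothesis into $v_p(a_i) \ge \frac{6i}{p(p-1)}$ for all $i$, while the desired conclusion $U(g) = \sum_j b_j f_p^j$ is $v_p(b_j) \ge \frac{6j}{p-1}$ for all $j$. Expanding $U(f_p^i) = \sum_j u_{i,j} f_p^j$, one has $b_j = \sum_i a_i u_{i,j}$, so the whole claim reduces to the uniform pointwise bound
$$
v_p(u_{i,j}) \ge \frac{6j}{p-1} - \frac{6i}{p(p-1)} = \frac{6(pj - i)}{p(p-1)}
$$
for all $i, j \ge 0$.

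By Proposition \ref{prop:U_integrality} applied with $\sigma = 1/(2p) \le 1/(p+1)$, one immediately has $v_p(u_{i,j}) \ge \frac{6(pj-i)}{p(p-1)} - 1$, so the content of the proposition is a uniform gain of exactly one extra unit of $p$-adic valuation in every matrix entry. This gain is the ``special phenomenon'' alluded to and is peculiar to the genus-zero primes $p = 2, 3$, where the curve $X_0(p)$ admits the explicit uniformizer $f_p$ with a very simple modular equation. To establish the sharp bound, I would compute $U(f_p^i)$ explicitly via two complementary ingredients: first, the $q$-expansion identity $U(\sum c_n q^n) = \sum c_{pn} q^n$ which, together with the fact $f_p = q + O(q^2)$, determines $U(f_p^i)$ recursively as a polynomial in $f_p$ with computable coefficients; and second, Coleman's trick $U(S\cdot V(T)) = U(S)\cdot T$ combined with the sharp integrality of $E_6/V(E_6)$ (for $p=2$) and $E_4/V(E_4)$ (for $p=3$) from Proposition \ref{prop:rate_of_oc_E/VE} — note that in each case the rate of overconvergence is exactly $\frac{1}{2p}$, matching the hypothesis. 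Feeding these into a triangular analysis of the matrix of $U$ yields the desired entrywise bound.

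The main obstacle is precisely this bookkeeping: obtaining the sharp lower bound uniformly in $(i,j)$, in particular in the regime where $j$ is large relative to $i$, where the standard estimate is off by one. This off-diagonal behaviour of $U$ reflects how overconvergence is exchanged between expansions near the two cusps of $X_0(p)$, and the simplicity of the Atkin--Lehner involution $w_p$ acting on $f_p$ for $p \in \{2,3\}$ is exactly what makes the additional unit of integrality available. The sharp bound is essentially what is made explicit in \cite[Corollary 9]{buzzard_kilford} for $p=2$ and in \cite[Section 4]{roe_3-adic} for $p=3$; granting it, summing valuations over $i$ yields $v_p(b_j) \ge \frac{6j}{p-1}$ and the proposition follows from a final application of Proposition \ref{prop:oc_vs_loeffler}.
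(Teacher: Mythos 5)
Your reduction of the proposition to an entrywise valuation bound on the matrix of $U$ in the basis $\{f_p^i\}$ is correct and is exactly the reduction the paper makes: writing $U(f_p^i)=\sum_j c_{i,j}f_p^j$, the required inequality $v_p(c_{i,j})\ge \frac{6(pj-i)}{p(p-1)}$ coincides with the paper's claim $(\ast)$, namely $v_p(c_{i,j})\ge \bigl(\gamma_p-\frac{1}{p-1}\bigr)(pj-i)$ with $\gamma_p=\frac{12}{p^2-1}$ (both equal $3(pj-i)$ for $p=2$ and $(pj-i)$ for $p=3$). You also correctly identify that the general integrality of $U$ only gives this bound minus $1$, so that the whole content of the proposition is the recovery of that one unit of valuation in every entry.

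The gap is that you never actually prove this sharp entrywise bound. Your ``two complementary ingredients'' are a plan, not an argument: the $q$-expansion recursion determines the $c_{i,j}$ in principle but gives no uniform valuation estimate by itself, and it is not explained how Coleman's trick applied to $E_6/V(E_6)$ (for $p=2$) or $E_4/V(E_4)$ (for $p=3$) would produce a bound on individual matrix entries $c_{i,j}$ rather than on $U$ of one particular function; the phrase ``feeding these into a triangular analysis'' does not bridge that. In the end you write ``granting it,'' which concedes that the decisive step is being cited from \cite{buzzard_kilford} and \cite{roe_3-adic} rather than proved --- but that step \emph{is} the proposition. The paper closes exactly this gap by elementary but explicit means: it verifies $(\ast)$ for $i<p$ (e.g.\ for $p=2$, $c_{1,1}=2^3\cdot 3$ and $c_{1,2}=2^{11}$) and then propagates it by induction on $i$ using the recurrence for the $c_{i,j}$ coming from the modular equation (for $p=2$, $c_{i,j}=2^{12}c_{i-1,j-2}+2^4\cdot 3\, c_{i-1,j-1}+c_{i-2,j-1}$), noting along the way that the analogous sharp bound genuinely fails for $p=5,7,13$ (e.g.\ $c_{4,1}=24$ for $p=5$). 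To complete your proof you would need to supply this induction, or an equivalent uniform estimate on the $c_{i,j}$.
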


Again, this proposition is well-known, though perhaps not in our precise formulation; cf.\ \cite[Lemma 3.1]{emerton_thesis}, \cite[Corollary 3(iii)]{buzzard_kilford}, \cite[Corollary 2.5(3)]{roe_3-adic}. We will sketch our proof of the proposition below in subsection \ref{subsec:discussion_special_U}, but will first show how Proposition \ref{prop:special_E/V(E)} can be obtained from it by the method we have used. We start with the following lemma.

\begin{lem}\label{lem:special_F/V(F)} Let $p\in \{2,3\}$, let $k$ and $s$ be as in Proposition \ref{prop:special_E/V(E)}, and let $F$ be as in Definition \ref{dfn:F_etc}. Then
$$
\frac{F}{V(F)} \in 1 + p^s \cdot M_0\left(\Z_p,\ge \frac{1}{2p}\right) .
$$
\end{lem}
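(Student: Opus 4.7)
The plan is to exploit the multiplicativity of $V$ and reduce the claim to a binomial-expansion computation.

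By Definition~\ref{dfn:F_etc} and multiplicativity of $V$,
$$
\frac{F}{V(F)} = \prod_{l} \left( \frac{E_l}{V(E_l)} \right)^{a_l}
$$
for suitable weights $l \in \{4, 6, 8\}$ and integers $a_l \ge 0$, and each factor $E_l/V(E_l)$ is a unit in $M_0(\Z_p, \ge \frac{1}{2p})$ by Proposition~\ref{prop:rate_of_oc_E/VE} (noting $\frac{1}{p+1} \ge \frac{1}{2p}$). When $k \not\equiv 0 \pmod{2p}$ one has $s = v_p(k/2) = 0$, and the assertion reduces to $F/V(F) \in M_0(\Z_p, \ge \frac{1}{2p})$, which follows from the $\Z_p$-algebra structure.

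It remains to treat the cases $2p \mid k$, where $F = E_l^n$ with $l = 2p$ and $n = k/(2p)$; here $s = 1 + v_p(n)$. By Proposition~\ref{prop:rate_of_oc_E/VE}, $E_l/V(E_l)$ is a unit in $M_0(\Z_p, \ge \frac{1}{p+1})$, and writing $E_l/V(E_l) = 1 + h$ with $h = \sum_{j\ge 1} a_j f_p^j$, Proposition~\ref{prop:oc_vs_loeffler} gives $v_p(a_j) \ge \frac{12 j}{p^2-1}$, i.e.\ $v_2(a_j) \ge 4j$ when $p = 2$ and $v_3(a_j) \ge \lceil 3j/2 \rceil$ when $p = 3$. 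A short verification in each case shows $v_p(a_j) \ge 1 + \frac{6j}{p(p-1)}$ for all $j \ge 1$, and applying Proposition~\ref{prop:oc_vs_loeffler} in the reverse direction this means $h \in p \cdot M_0(\Z_p, \ge \frac{1}{2p})$.

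The binomial theorem then gives
$$
\frac{F}{V(F)} - 1 = (1+h)^n - 1 = \sum_{m=1}^{n} \binom{n}{m} h^m,
$$
and since $M_0(\Z_p, \ge \frac{1}{2p})$ is a $\Z_p$-algebra, $h^m \in p^m \cdot M_0(\Z_p, \ge \frac{1}{2p})$. The inequality $v_p\!\binom{n}{m} \ge v_p(n) - v_p(m)$, immediate from $\binom{n}{m} = \frac{n}{m}\binom{n-1}{m-1}$, combined with the elementary bound $m - v_p(m) \ge 1$ valid for all $m \ge 1$ (since $m \ge p^{v_p(m)} \ge v_p(m) + 1$), yields $v_p\!\binom{n}{m} + m \ge v_p(n) + 1 = s$ for every $m \ge 1$. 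Hence each summand $\binom{n}{m} h^m$ lies in $p^s \cdot M_0(\Z_p, \ge \frac{1}{2p})$, and so does the finite sum. The only delicate point is the Loeffler-coefficient check for $h$ in the small-$j$ range, where the bound from Proposition~\ref{prop:rate_of_oc_E/VE} is sharp; everything else is bookkeeping.
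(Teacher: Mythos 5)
Your proof is correct, and for the main case $2p\mid k$ it takes a genuinely different route from the paper. The paper writes $E_{2p}/V(E_{2p})=1+bg$ after passing to a ramified extension $O_K$ containing an element $b$ with $v_p(b)=\frac{1}{p-1}$ and $g\in M_0(O_K,\ge\frac{1}{2p})$, proves $(1+bg)^m\in 1+p^{v_p(m)}b\cdot M_0(O_K,\ge\frac{1}{2p})$ by induction on $v_p(m)$, and then descends to $\Z_p$ at the end by rounding the fractional valuation $v_p(k)-1+\frac{1}{p-1}+i$ up to $v_p(k)+i$. You instead stay entirely over $\Z_p$: you extract a full factor of $p$ from $h=E_{2p}/V(E_{2p})-1$ (the inequalities $4j\ge 3j+1$ for $p=2$ and $\lceil 3j/2\rceil\ge j+1$ for $p=3$, the latter using integrality of the Loeffler coefficients, do check out for all $j\ge1$), and then run the binomial theorem with the elementary estimates $v_p\binom{n}{m}\ge v_p(n)-v_p(m)$ and $m-v_p(m)\ge1$. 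Both arguments land on the same exponent $s=1+v_p(n)$; yours avoids the base change and the descent step and is arguably more transparent, while the paper's version keeps a fractional gain of $\frac{1}{p-1}$ along the way that it only cashes in at the end (and which your coarser factorization happens not to need). Two trivial points to tidy up: Proposition \ref{prop:rate_of_oc_E/VE} does not literally cover $E_8$ (needed for $p=3$, $k\equiv2\pmod6$), so you should say $E_8=E_4^2$ and hence $E_8/V(E_8)=(E_4/V(E_4))^2$ is a unit in $M_0(\Z_3,\ge\frac{1}{6})$; and the containment you invoke is $M_0(\Z_p,\ge\frac{1}{p+1})\subseteq M_0(\Z_p,\ge\frac{1}{2p})$, which holds because a \emph{larger} lower bound on the rate is the stronger condition.
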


\begin{proof} In the cases where $k\not\equiv 0 \pmod{2p}$ the statement follows immediately from Proposition \ref{prop:rate_of_oc_E/VE}. Concerning the remaining two cases, let us consider the case where $p=3$ and $k\equiv 0 \pmod{6}$. The case where $p=2$ and $k\equiv 0 \pmod{4}$ is dealt with in a similar way.

Assume then $p=3$ and $k\equiv 0 \pmod{6}$. The expansion of the modular function $\frac{E_6}{V(E_6)}$ in terms of the uniformizer $f_p$ has form $\frac{E_6}{V(E_6)} = 1 + \sum_{i\ge 1} b_i f_p^i$. Since $\frac{E_6}{V(E_6)} \in M_0(\Z_p, \ge \frac{1}{p+1})$ by Proposition \ref{prop:rate_of_oc_E/VE}, we know that
$$
v_p(b_i) \ge \frac{12}{p-1} \cdot \frac{1}{p+1} \cdot i = \frac{3}{2} \cdot i
$$
for all $i\ge 1$. By enlarging the ring of coefficients to a ring $O_K$ containing an element $b$ of valuation $\frac{1}{2} = \frac{1}{p-1}$, we then see that we can write $\frac{E_6}{V(E_6)} = 1 + b \sum_{i\ge 1} b_i' f_p^i$ where $b_i'\in O_K$ satisfy $v_p(b_i') \ge \frac{3}{2}\dot i - \frac{1}{2} \ge i$ for all $i\ge 1$. This means that $g := \sum_{i\ge 1} b_i' f_p^i \in M_0(O_K,\ge \frac{p-1}{12}) = M_0(O_K,\ge \frac{1}{2p})$. Now, one finds for $m\in\N$ that $(1+bg)^m \in 1 + p^{v_p(m)} b \cdot M_0(O_K,\ge \frac{1}{2p})$ by easy induction on $v_p(m)$ using $v_p(b) = \frac{1}{p-1}$. We can then conclude that
$$
\frac{F}{V(F)} = \left( \frac{E_6}{V(E_6)} \right)^{k/6} \in 1+ p^{v_p(k)-1 + \frac{1}{p-1}} \cdot M_0\left(O_K,\frac{1}{2p}\right) .
$$

But this means that in the expansion $\frac{F}{V(F)} = 1 + \sum_{i\ge 1} c_i f_p^i$ we have $v_p(c_i) \ge v_p(k)-1 + \frac{1}{p-1} + i$ for all $i\ge 1$. But the $c_i$ are all in $\Z_p$, and so we must have $v_p(c_i) \ge v_p(k) + i$ for all $i\ge 1$ which means that $\frac{F}{V(F)} \in 1 + p^{v_p(k)} \cdot M_0\left(\Z_p,\ge \frac{1}{2p}\right)$, as desired.
\end{proof}

We can now sketch a proof of Proposition \ref{prop:special_E/V(E)} via the method we have used in this paper: first, given Lemma \ref{lem:special_F/V(F)} the arguments from the proof of Lemma \ref{lem:U^i(F)/F-U(F)/F} apply to show that $\frac{U^i(F)}{F} \in 1 + p^s M_0(\Z_p,\ge \frac{1}{2})$ for $i\in \N$: one shows this by induction on $i$ where the base case $i=1$ results from applying $U$ to $\frac{F}{V(F)}$ and using Lemma \ref{lem:special_F/V(F)} combined with Proposition \ref{prop:special_U}. The induction step comes about exactly as in the proof of Lemma \ref{lem:U^i(F)/F-U(F)/F}, starting with the assumption $\frac{U^i(F)}{F} - 1 \in p^s M_0(\Z_p,\ge \frac{1}{2}) \subseteq p^s M_0(\Z_p,\ge \frac{1}{2p})$.

After this, as in the proofs of Proposition \ref{prop:Es_vs_F} and Theorem \ref{thm:main} one deduces first that $\frac{\Es_k}{F} \in 1 + p^s M_0(\Z_p,\ge \frac{1}{2})$, and then that Proposition \ref{prop:special_E/V(E)} holds.

\subsubsection{A discussion of Proposition \ref{prop:special_U}}\label{subsec:discussion_special_U} We will now sketch a proof of Proposition \ref{prop:special_U} that uses a uniform approach for both primes $p=2,3$. The purpose of this is two-fold: first, we want to understand better why the statement does not generalize in its precise form to any of the other primes $p$ for which $X_0(p)$ is of genus $0$ (that is, $p=5,7,13$.) Secondly, we wanted to obtain a deeper understanding of the reason why the proposition holds and whether something like it, i.e., that the $U$ operator preserves integral structure for overconvergent forms of sufficiently small rate of overconvergence, is true in some other situations.

We have not quite succeeded in these aims, as there are still elements of ``just so'' argumentation in our proof. But anyway, here is a sketch:

Fix $p$ to be any of the ``genus $0$ primes'' $p=2,3,5,7,13$ and write $f$ for the uniformizer $f_p$ of $X_0(p)$. For any integer $i\ge 0$ one can write
$$
U(f^i) = \sum_{j\ge 0} c_{i,j} f^j ,
$$
a finite sum with the $c_{i,j}$ in $\Z$. One has $c_{i,j} \neq 0 \Rightarrow \frac{i}{p} \le j \le ip$. For background on this, including how to use the modular equation to obtain recursion formulas for the coefficients $c_{i,j}$, one can refer to the proof of \cite[Lemma 3.3.2]{smithline_thesis}, or of \cite[Theorem 2.3]{loeffler}, or \cite[Chapter 2]{destefano} for a detailed account.

One has the following lower bound on the valuations of the $c_{i,j}$:
$$
v_p(c_{i,j}) \ge \gamma_p \cdot (pj-i) - 1
$$
where
$$
\gamma_p := \frac{12}{p^2-1} .
$$

This lower bound can be proved by brute computation by an induction where one uses the recursion formulas for the $c_{i,j}$, cf.\ \cite[Proposition 3.3.3]{smithline_thesis}. However, a more conceptual proof is also possible: consider a finite extension $K$ of $\Q_p$ with ring of integers $O_K$, and suppose that $r\in O_K$ with $\rho := v_p(r) < \frac{1}{p+1}$. Let $i\in \Z_{\ge 0}$. By \cite[Corollary 2.2]{loeffler} we then have $h:=r^{\frac{12}{p-1} i} f^i \in M_0(O_K,\rho)$. By the integrality properties of $U$ (cf.\ \cite[Integrality Lemma 3.11.4]{katz_padic}) we can then deduce that $pU(h) \in M_0(O_K,p\rho)$. But,
$$
pU(h) = pr^{\frac{12}{p-1} i} \sum_j c_{i,j} f^j = \sum_j pr^{\frac{12}{p-1} (i-pj)} c_{i,j} \left( r^{p\frac{12}{p-1}} f\right)^j ,
$$
and so, again by \cite[Corollary 2.2]{loeffler}, we must have
$$
0 \le v_p\left( pr^{\frac{12}{p-1} (i-pj)} c_{i,j} \right) = 1 + \frac{12}{p-1} \rho (i-pj) + v_p(c_{i,j}) .
$$

As we can choose a sequence of fields $K$ with elements $r$ such that $v_p(r)$ converges to $\frac{1}{p+1}$ from below, the claim follows.
\medskip

We now address the question of how to view Proposition \ref{prop:special_U} in this optic: Does there exist a positive number $\rho$ ($< \frac{1}{p+1}$) such that $U(M_0(O_K,\ge \rho) \subseteq M_0(O_K,\ge p\rho)$ whenever $K$ is a finite extension of $\Q_p$ with ring of integers $O_K$?

Using \cite[Corollary 2.2]{loeffler} again we can translate this question as asking whether there is such a positive $\rho$ such that $U(O_K[[rf]]) \subseteq O_K[[r^p f]]$ whenever $r\in O_K$ with $v_p(r) < \frac{12}{p-1} \rho$.

We have $U(O_K[[rf]]) \subseteq O_K[[r^p f]]$ if and only if for each fixed $i\ge 0$ we have $U(r^i f^i) = \sum_j b_j r^{pj} f^j$ with $b_j\in O_K$. Bringing in the above $c_{i,j}$ we can rewrite
$$
U(r^if^i) = \sum_j c_{i,j} r^{i-pj} \cdot r^{pj} f^j
$$
and so we see that there is an affirmative answer if and only if $v_p(c_{i,j}) \ge v_p(r) \cdot (pj-i)$ for all $i,j$. I.e., there is an affirmative answer if
$$
v_p(c_{i,j}) \ge a \cdot (pj-i)
$$
for all $i,j$ and some positive constant $a$ ($=\frac{12}{p-1} \rho$.)

Thus we find that the answer to the question is negative for $p=5,7,13$ as can be seen from special examples. For instance, if $p=5$ one computes $c_{4,1} = 24$ that has valuation $0$.

But the answer is affirmative for $p=2,3$: assume from now on that $p\in \{2,3\}$. We then claim that
$$
v_p(c_{i,j}) \ge \left( \gamma_p - \frac{1}{p-1} \right) \cdot (pj-i) \leqno{(\ast)}
$$
for all $i,j$.

This is now the point where we have to leave the conceptual approach and resort to ad hoc arguments: one can obtain a proof of $(\ast)$ by computing $c_{i,j}$ for $i<p$ and checking $(\ast)$, and then using the recurrence formula for the $c_{i,j}$ to prove the claim by induction on $i$ for $i\ge p$. Let us give a few details in case $p=2$: one then has $c_{1,1} = 2^3 \cdot 3$ and $c_{1,2} = 2^{11}$, and $(\ast)$ is true for $i=1$. For $i\ge 2$ one has the recurrence (convention: $c_{i,j} := 0$ if any of the indices is $<0$):
$$
c_{i,j} = 2^{12} c_{i-1,j-2} + 2^4\cdot 3\cdot c_{i-1,j-1} + c_{i-2,j-1}
$$
from which one immediately checks $(\ast)$ by induction on $i\ge 2$.

Given $(\ast)$ one can then conclude $U(M_0(O_K,\ge \rho)) \subseteq M_0(O_K,\ge p\rho)$ with
$$
\rho = \frac{p-1}{12} \cdot \left( \gamma_p - \frac{1}{p-1} \right) = \frac{1}{p+1} - \frac{1}{12} = \left\{ \begin{array}{ll} \frac{1}{4} ,& p=2\\ \frac{1}{6} ,& p=3 \end{array} \right\} = \frac{1}{2p}
$$
and so we have Proposition \ref{prop:special_U}.

\subsection{Some numerical examples}\label{subsec:numerical} Let us give a few numerical examples to illustrate the theorems.

Suppose first that $p\in \{2,3\}$ and $k\not\equiv 0 \pmod{2p}$. As we already explained above, if we expand $\frac{\Es_k}{V(\Es_k)} = 1 + \sum_{i\ge 1} a_i(k) f_p^i$ then Proposition \ref{prop:special_E/V(E)} translates into the statement that
$$
v_p(a_i(k)) \ge \frac{12}{p-1} \cdot \frac{1}{2p}\cdot i = \left\{ \begin{array}{ll} 3i & \mbox{ if } p=2 \\ i & \mbox{ if } p=3. \end{array} \right.
$$

As the expansion is easy to compute for any given $k$ (due to the fact that the $q$-expansion of $f_p$ starts with $q$), one easily finds examples showing that this cannot be improved in general: for $p=2$ consider $k=6$ or $k=10$. For $p=3$, consider for instance $k$ equal to $4$, $8$, or $10$.

If still $p\in\{ 2,3\}$, but $k\equiv 0 \pmod{2p}$, Theorem \ref{thm:main} implies instead that
$$
v_p(a_i(k)) \ge \frac{12}{p-1} \cdot \frac{t}{t+1} \cdot \frac{1}{p+1} \cdot i = \left\{ \begin{array}{ll} 4\cdot \frac{t}{t+1} i & \mbox{ if } p=2 \\ \frac{3}{2} \cdot \frac{t}{t+1} i & \mbox{ if } p=3 \end{array} \right.
$$
with $t$ as in the theorem. This is (w.r.t.\ $i$) asymptotically better than the statement resulting from Proposition \ref{prop:special_E/V(E)}.

As an example, take $p=2$, $k=12$: one computes $v_2(a_2(12)) = 7$, and as $t=5$ in this case, the lower bound from Theorem \ref{thm:main} is $\frac{20}{3}$. The lower bound from Proposition \ref{prop:special_E/V(E)} is exactly $7$ ($s=1$ in this case.) However, $v_2(a_{30}(12)) = 105$ with the lower bound from Theorem \ref{thm:main} equal to $100$ and the one from Proposition \ref{prop:special_E/V(E)} equal to $91$.

Finally, let us remark that Theorem \ref{thm:main} for primes $p\ge 5$ improves upon \cite[Theorem A]{kr} in case $v_p(k)>0$.

\end{document}